\newcommand{\HH}{\operatorname{HH}}
\newcommand{\bv}{\operatorname{BV}}
\newcommand{\Der}{\operatorname{Der}}
\let\phi\varphi 
\newcommand{\gphom}{\operatorname{Hom}_{\textrm{Group}}}
\theoremstyle{plain}
\newtheorem{theorem}{Theorem}
\newtheorem*{theorem*}{Theorem}
\newtheorem{lemma}[theorem]{Lemma}
\newtheorem{corollary}[theorem]{Corollary}
\newtheorem*{corollary*}{Corollary}
\newtheorem{example}[theorem]{Example}
\numberwithin{example}{section}
\numberwithin{theorem}{section}
\numberwithin{corollary}{section}
\theoremstyle{definition}
\newtheorem*{definition*}{Definition}
\newtheorem{remark}[theorem]{Remark}
\newtheorem{defn}[theorem]{Definition}
\renewcommand{\env@cases}[1][@{}l@{\quad}l@{}]{%
  \let\@ifnextchar\new@ifnextchar
  \left\lbrace
  \def\arraystretch{1.2}%
  \array{#1}%
}
\newlist{pfparts}{enumerate}{1}
\setlist[pfparts,1]{%
  label=\arabic.,
  font=\normalfont\textbf,
  itemindent=2pt,
  wide,
  itemsep=0pt,topsep=2pt,
  labelsep=0.75ex
}
\def\namedlabel#1#2{\begingroup
    #2%
    \def\@currentlabel{#2}%
    \phantomsection\label{#1}\endgroup 
}
\newtheorem{proposition}[theorem]{Proposition}
\declaretheoremstyle[
headfont=\color{black}\normalfont\bfseries,
notefont = \color{red}\normalfont,
bodyfont=\color{black}\normalfont\itshape,
]{coloredDefinition}
\declaretheoremstyle[
headfont=\color{black}\normalfont\bfseries,
notefont = \color{teal}\normalfont,
bodyfont=\color{black}\normalfont\itshape,
]{coloredExample}
\renewenvironment{abstract}
{\begin{quote}
\noindent\par{\scshape\abstractname.}}
{\medskip\noindent
\end{quote}
}
\title{On solvability of $\operatorname{HH}^1$ of odd $p$-groups.}
\author{\textsc{Matthew Antrobus}}
\date{}
\let\inserttitle\@title
\begin{document}
\maketitle
\begin{abstract}
We give a necessary and sufficient criterion for the solvability of $\HH^1(kP)$ as a Lie algebra, where $P$ is a $p$-group with $p$ odd, in terms of a directed graph constructed from the group $P$. This gives non-trivial results on the structure of such Lie algebras. 
\end{abstract}
\section{Introduction}
The solvability of the first Hochschild cohomology of group algebras has been of great interest, and has been investigated in many papers (see \cite{BKL},  \cite{Eisele}, \cite{Wang}, \cite{Liu},  \cite{Degrassi}). Importantly, the Lie algebra structure of $\operatorname{HH}^1(kG)$ is a Morita invariant, and so has promise to resolve questions in Block theory. In particular, Hochschild cohomology, and specifically $\HH^1(kG)$, gives numerical invariants of Morita equivalence classes. In the present paper, we give a necessary and sufficient criterion for the Hochschild cohomology of the group algebra of a $p$-group to be solvable, for $p\not=2$. This criterion gives us other non-trivial results on their structure. \\
Let $p$ be a prime, and $k$ a field of characteristic $p$. Given $P$ a $p$-group, $\Phi(P)$ is the intersection of all maximal subgroups of $P$, and equivalently is the smallest subgroup so that $P/\Phi(P)$ is elementary abelian. Moreover, given an inclusion $H\rightarrow G$ of groups, there is a group homomorphism $\operatorname{tr}_H^G:G\rightarrow H/H'$. The definition of this map is nuanced and so we give it later. We will give a criterion for the solvability of $\operatorname{HH}^1(kP)$ in terms of a directed graph, defined as follows.
\begin{defn} Given a $p$-group $P$, we construct a directed graph $\Gamma(P)$ with:
\begin{itemize} 
\item vertices labelled by elements of $P/\Phi(P)$.
\item an edge from $a$ to  $b\in P/\Phi(P)$ when there are $x, y \in P$ with image $a,b$ in $P/\Phi(P)$ so that
$$\operatorname{tr}_{C_P(x)}^P(y) \not \in \Phi(C_P(x) / C_P(x)')$$
\end{itemize}
\end{defn}
\begin{theorem} When $p\not=2$, $\HH^1(kP)$ is solvable as a Lie algebra if and only if the directed graph $\Gamma(P)$ is acyclic. \end{theorem}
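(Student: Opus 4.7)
I would start from the Eckmann--Shapiro decomposition of Hochschild cohomology of a group algebra,
\[
\HH^1(kP) \;\cong\; \bigoplus_{[x]\in\operatorname{Conj}(P)} H^1(C_P(x), k) \;\cong\; \bigoplus_{[x]} \homo\!\bigl(C_P(x)/\Phi(C_P(x)),\, k\bigr),
\]
and regroup the summands according to the image of $[x]$ in $P/\Phi(P)$, writing $\HH^1(kP) = \bigoplus_{a\in P/\Phi(P)} U_a$ with $U_a := \bigoplus_{[x]\colon x\Phi(P)=a} H^1(C_P(x),k)$. This yields a vector-space decomposition indexed precisely by the vertex set of $\Gamma(P)$.

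Next, I would realise each class $\alpha \in H^1(C_P(x), k) \subseteq U_a$ by a derivation $d_\alpha \colon kP \to kP$, using that the corresponding $1$-cocycle $P \to kP^{\mathrm{ad}}$ takes values in the span $k[x]^P$ of the conjugacy class of $x$. A direct computation of the Gerstenhaber bracket at the level of derivations should yield $[U_a, U_b] \subseteq U_a + U_b$, with the $U_a$-component of $[\alpha,\beta]$ proportional to a term $\alpha(\operatorname{tr}_{C_P(x)}^P(y))$. Consequently, the projection $[U_a, U_b] \to U_a$ is non-zero precisely when there is an edge $a \to b$ in $\Gamma(P)$, which is the Lie-theoretic reading of the graph.

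For the ``acyclic $\Rightarrow$ solvable'' direction, take a topological ordering $v_1, \ldots, v_N$ of $\Gamma(P)$, so every edge $v_i \to v_j$ has $i < j$. The chain $F_i := \bigoplus_{k \leq i} U_{v_k}$ is a filtration by Lie ideals: for any $a$ and any $k \leq i$, the bracket $[U_a, U_{v_k}]$ lies in $U_a + U_{v_k}$, and its $U_a$-component vanishes unless $a \to v_k$ is an edge, in which case $a = v_j$ with $j < k \leq i$, forcing $a \in F_i$. Moreover each quotient $F_i/F_{i-1} \cong U_{v_i}$ is abelian, because the $U_{v_i}$-component of a bracket of elements of $F_i$ would require either a self-loop at $v_i$ (forbidden by acyclicity) or an edge $v_i \to v_k$ with $k < i$ (forbidden by the topological order). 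Hence $\HH^1(kP)$ is solvable.

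For the converse, given a directed cycle $v_1 \to v_2 \to \cdots \to v_n \to v_1$, consider the Lie subalgebra $V := \bigoplus_j U_{v_j}$, which is closed under the bracket by the previous step. Using the witnesses to each edge I would construct $\alpha_j \in U_{v_j}$ and iterate brackets around the cycle to produce a non-terminating derived series, ideally by identifying within $V$ a subquotient isomorphic to a known non-solvable modular Lie algebra. This is the main obstacle: pairwise non-triviality of brackets does not itself preclude solvability (every two-dimensional Lie algebra is solvable), so the argument must genuinely exploit the cyclic structure and must be sensitive to the hypothesis $p \neq 2$, since in characteristic $2$ the sign cancellations in the bracket formulas would spoil the construction. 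I expect the technical heart of the proof to lie in carefully tracking how the projections $[U_a, U_b] \to U_a$ compose and accumulate along traversals of the cycle.
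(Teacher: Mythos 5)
Your starting point (centralizer decomposition plus transfer maps) matches the paper's, but there are two genuine problems. The first is that your claimed bracket structure $[U_a,U_b]\subseteq U_a+U_b$ is wrong, and the whole ``acyclic $\Rightarrow$ solvable'' argument built on it collapses. The decomposition indexed by $A:=P/\Phi(P)$ is a \emph{group grading}: after passing to the quotient $kA$ the derivation attached to the summand labelled $a$ acts as $z\mapsto\phi(z)za$, so composing two of them multiplies by $ab$ and one gets $[(\phi,a),(\psi,b)]=(\phi(b)\psi-\psi(a)\phi,\,ab)$, i.e.\ $[U_a,U_b]\subseteq U_{ab}$, not $U_a+U_b$. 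Consequently your filtration $F_i=\bigoplus_{k\le i}U_{v_k}$ along a topological order is not a chain of Lie ideals: a bracket of elements of $F_i$ can land in a summand $U_{v_jv_k}$ with $v_jv_k$ arbitrarily placed in the order. The paper's argument instead peels off a \emph{minimal} vertex $m$ (no incoming edges) and shows $[\mathfrak{a},\mathfrak{a}]$ misses the $m$-summand; the delicate case is $ab=m$ with $a,b\neq m$, where one uses acyclicity (no loops, no $2$-cycles) to force $\phi(b)=\psi(a)=0$ so the bracket vanishes outright. Iterating over a ``minimal-first'' ordering of $A$ kills the derived series. A second, more minor omission in this direction: you work with $\HH^1(kP)$ itself, whereas the reduction to the clean bracket formula requires passing to the image in $\HH^1(k(P/\Phi(P)))$ and checking that the kernel of that comparison map (and the inner derivations) lie in the nilpotent subalgebra $\Der_2(kP)$ of derivations with image in $J^2$; without this the bracket of two centralizer classes in $\HH^1(kP)$ is not the formula you need.

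The second problem is that the converse (``cycle $\Rightarrow$ not solvable''), which you correctly identify as the technical heart, is left entirely open in your proposal. The paper's mechanism is concrete: for a loop at $a$ one takes $\phi$ with $\phi(a)\neq0$ and checks that $\operatorname{Span}_k((\phi,a),(\phi,a^{-1}),(\phi,e))$ is a perfect subalgebra --- the bracket $[(\phi,a),(\phi,a^{-1})]=(-2\phi(a)\phi,e)$ is where $p\neq2$ enters. For a cycle of length $\ge2$ one first reduces (by the loop and $2$-cycle cases) to the situation where at most one of $\phi(b),\psi(a)$ is nonzero, and then computes $\operatorname{Ad}((\phi,a))^{p}((\psi,b))=(\phi(b)^{p}\psi,b)$, using that every element of $P/\Phi(P)$ has order $p$ so that $a^{p}b=b$; this puts each generator of the subalgebra generated by the cycle's witnesses back into its derived subalgebra, so that subalgebra is perfect and solvability fails. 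Your instinct to look for a known non-solvable subquotient is reasonable but is not how the obstruction actually manifests here: it is the $p$-fold iterated adjoint action returning to the original graded piece, which again depends essentially on the $A$-grading you did not have.
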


We call $\HH^1(kP)$ $n$-strongly solvable when $D^n(\HH^1(kP))$ is nilpotent. In particular, writing $l(\Gamma(P))$ for the length of a longest directed path in $\Gamma(P)$ we have the following result.
\begin{corollary} 
 When $p\not=2$, if $l(\Gamma(P))<n+1$ then $\HH^1(kP)$ is $n$-strongly solvable. If $l(\Gamma(P)) > n+1$ then $\HH^1(P)$ is not $n$-strongly solvable.
\end{corollary}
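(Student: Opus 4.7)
The plan is to quantify the proof of Theorem 1.2. That theorem identifies acyclicity of $\Gamma(P)$ with solvability of $\HH^1(kP)$ by reading cycles in the graph as obstructions to the derived series terminating; here I would refine this correspondence so that the length of a longest directed path controls the number of derived steps required before reaching the nilpotent radical.

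The core construction is a filtration $\HH^1(kP) = F^0 \supseteq F^1 \supseteq \cdots \supseteq F^{l(\Gamma(P))+1}$ indexed by graph depth, using the grading by $P/\Phi(P)$ already implicit in the construction of $\Gamma(P)$. Here $F^i$ should consist of cohomology classes whose support (in the $P/\Phi(P)$-grading) lies on vertices at the end of a directed path of length at least $i$, and the key bracket-shift property to verify is $[\HH^1(kP), F^i] \subseteq F^{i+1}$. This amounts to a direct reading of the edge condition $\operatorname{tr}_{C_P(x)}^P(y) \notin \Phi(C_P(x)/C_P(x)')$: a bracket of classes attached to vertices $a, b \in P/\Phi(P)$ moves one step along an outgoing edge of $\Gamma(P)$, and is nonzero in $F^i/F^{i+1}$ precisely when such an edge exists.

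From this bracket-shift, iteration gives $D^n(\HH^1(kP)) \subseteq F^n$. When $l(\Gamma(P)) < n+1$, the subspace $F^n$ is supported on vertices past the longest directed path, hence with no outgoing edges, forcing $F^n$ to be abelian and in particular nilpotent; this proves the first assertion. For the converse, suppose $l(\Gamma(P)) \geq n+2$ and fix a directed path $v_0 \to v_1 \to \cdots \to v_{l(\Gamma(P))}$. I would lift the initial segment $v_0 \to \cdots \to v_n$ to an $n$-fold iterated bracket $\xi \in D^n(\HH^1(kP))$ whose image in $F^n/F^{n+1}$ is nonzero and supported at $v_n$. The tail $v_n \to v_{n+1} \to v_{n+2}$ of at least two remaining edges then allows one to generate a non-terminating lower central series inside $D^n(\HH^1(kP))$, contradicting nilpotency.

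The main obstacle is the converse: ensuring that the inductively defined brackets do not collapse in the filtration quotients, so that $\xi$ genuinely realizes depth $n$ with a nontrivial outgoing tail. This will likely require a Jordan--Chevalley-type splitting of $F^{\bullet}$ into semisimple and nilpotent parts compatible with the vertex grading, and should already be implicit in the proof of Theorem 1.2 since that theorem needs the same control to distinguish cyclic from acyclic graphs.
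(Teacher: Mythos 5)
Your central technical claim --- the bracket-shift property $[\HH^1(kP), F^i] \subseteq F^{i+1}$ --- is false, and its failure is not incidental: it is precisely the phenomenon the second half of the corollary is about. The bracket formula $[(\phi,a),(\psi,b)] = (\phi(b)\psi - \psi(a)\phi,\, ab)$ places the result in the summand labelled by the group product $ab$, not at a vertex one edge further along a path; in particular $\operatorname{Ad}((\phi,a))^p((\psi,b)) = \phi(b)^p(\psi,b)$ lands back in $\mathfrak{h}_b$, so bracketing with $\mathfrak{h}_a$ (or with $\mathfrak{h}_e$, which acts by scaling on each $\mathfrak{h}_b$) does not increase the depth of the support at all. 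If your bracket-shift held, one would get $C^i(\HH^1(kP)) \subseteq F^i$ and hence nilpotency of the whole algebra whenever $\Gamma(P)$ is acyclic, contradicting the very statement being proved as soon as $l(\Gamma(P)) > 1$ (e.g.\ $M27$). What is true, and what the paper establishes, is compatibility with the \emph{derived} series rather than the lower central series: $[\mathfrak{h}_{T_i}, \mathfrak{h}_{T_i}] \subseteq \mathfrak{h}_{T_{i+1}}$ (by deleting minimal vertices from the support set), and in fact $D^i(\mathfrak{h}) = \mathfrak{h}_{T_i}$ with equality, the reverse inclusion coming from the same $\operatorname{Ad}^p$ identity. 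The gap between the two series is the entire content of ``$n$-strongly solvable'' versus ``nilpotent,'' so this is not a repairable slip of wording but a wrong mechanism.

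For the converse you correctly sense that a surviving tail of edges should obstruct nilpotency of $D^n$, but the ingredient you need is again $\operatorname{Ad}(\mathfrak{h}_a)^p(\mathfrak{h}_b) = \mathfrak{h}_b$ for an edge $a \to b$ with $a \in T_n$ and $b \in T_{n+1}$: since both summands lie in $D^n(\mathfrak{h}) = \mathfrak{h}_{T_n}$, this pins $\mathfrak{h}_b$ inside every term of the lower central series of $D^n(\mathfrak{h})$. No Jordan--Chevalley-type splitting is required, and there is no ``collapse'' to rule out, because the equality $D^n(\mathfrak{h}) = \mathfrak{h}_{T_n}$ is already exact thanks to the grading by $P/\Phi(P)$. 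Two further points of bookkeeping: the filtration must be taken with respect to the reduced graph $\bar\Gamma(P)$, since $l(\Gamma(P)) = l(\bar\Gamma(P)) + 1$ is where the offset $n+1$ in the statement originates; and the argument lives in the image $\mathfrak{h} \subseteq \HH^1(kP/\Phi(P))$, so the conclusion must be transported back to $\HH^1(kP)$ across the nilpotent kernel of $\mathfrak{q}$ via Proposition \ref{nstrongsolv} --- the correct endpoint of the first half is $D^n(\mathfrak{h}) = 0$, hence $D^n(\HH^1(kP))$ is contained in a nilpotent ideal, not that some $F^n$ is abelian.
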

Given $p$-groups $P_1, P_2$, we can describe the Hochschild cohomology of their product $\nolinebreak\operatorname{HH}^1(k(P_1 \times P_2))$ in terms of $\HH^{j}(kP_i)$ where $j\in \{0,1\}$ and $i\in \{1, 2\}$. Crucially should $\HH^1(k(P_1 \times P_2))$ be solvable, then it is immediate that both of $\HH^1(kP_1)$ and $\HH^1(kP_2)$ are solvable. The converse is not true in general - taking $P_{1}, P_{2} = C_2$ gives a counterexample. However, our theorem gives us a converse for odd primes.
\begin{corollary} Take $p\not=2$, $p$-groups $P_1, P_2$, and write $P:=P_1 \times P_2$. $\HH^1(k P)$ is solvable if and only if $\HH^1(kP_1)$ and $\HH^1(kP_2)$ are solvable. \end{corollary}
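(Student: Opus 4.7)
The plan is to apply the main theorem to reduce the corollary to a combinatorial statement: $\Gamma(P_1 \times P_2)$ is acyclic if and only if both $\Gamma(P_1)$ and $\Gamma(P_2)$ are acyclic. The first step is then to describe the edges of $\Gamma(P_1 \times P_2)$ in terms of the edges of each $\Gamma(P_i)$.

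All the data entering the definition of $\Gamma$ is compatible with direct products: $\Phi(P_1 \times P_2) = \Phi(P_1) \times \Phi(P_2)$ (so vertices of $\Gamma(P_1 \times P_2)$ are canonically pairs $(a_1, a_2)$); for $x = (x_1, x_2)$ we have $C_{P_1 \times P_2}(x) = C_{P_1}(x_1) \times C_{P_2}(x_2)$, and then both $(-)'$ and $\Phi$ of the resulting abelian quotients commute with the direct product decomposition; and the transfer map $\operatorname{tr}_{H_1 \times H_2}^{G_1 \times G_2}$ is the direct product of the $\operatorname{tr}_{H_i}^{G_i}$. Combining these, the condition $\operatorname{tr}_{C_P(x)}^P(y) \notin \Phi(C_P(x)/C_P(x)')$ is exactly the disjunction of the two component-wise conditions, and since the $x_i, y_i$ can be chosen independently, this gives: there is an edge $(a_1, a_2) \to (b_1, b_2)$ in $\Gamma(P_1 \times P_2)$ iff for some $i \in \{1,2\}$, $a_i \to b_i$ is an edge in $\Gamma(P_i)$.

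One direction of the corollary is now immediate (and reproves the trivial half noted before the statement): any cycle in $\Gamma(P_i)$ lifts to a cycle in $\Gamma(P_1 \times P_2)$ by sitting the other coordinate at the identity.

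The main obstacle is the converse: given acyclic $\Gamma(P_1)$ and $\Gamma(P_2)$, to conclude $\Gamma(P_1 \times P_2)$ is acyclic. Naive projection of a cycle in $\Gamma(P_1 \times P_2)$ onto a coordinate fails, because an edge in the product may be witnessed solely by the factor whose coordinate does not move, so the projected walk in $\Gamma(P_i)$ can contain vertex-changes unjustified by any edge of $\Gamma(P_i)$. My approach is to prove the following sharpening of the main theorem, specific to odd $p$: $\Gamma(Q)$ is acyclic if and only if it has no edges. Granting this, acyclicity of both $\Gamma(P_i)$ forces them to be edgeless, and the componentwise description above then forces $\Gamma(P_1 \times P_2)$ to be edgeless, hence trivially acyclic. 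The proof of the sharpening would take a witnessing pair $(x, y)$ for an edge $a \to b$ and manufacture a self-loop at $a$ by analysing $\operatorname{tr}_{C_P(x)}^P(x)$ via the orbit formula for the transfer of a central element (using $x \in Z(C_P(x))$), then showing the result lies outside $\Phi(C_P(x)/C_P(x)')$. This is the technical heart of the argument, and it must genuinely use $p$ odd: the $C_2 \times C_2$ failure mentioned by the author shows that the analogous sharpening has to break for $p=2$.
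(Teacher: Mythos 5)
Your overall strategy---use the main theorem to reduce the corollary to comparing $\Gamma(P_1\times P_2)$ with the $\Gamma(P_i)$---matches the paper's, but two of your key claims are false. First, the transfer on a product is \emph{not} the plain direct product of the component transfers: for $x=(x_1,x_2)$ a right transversal of $C_P(x)=C_{P_1}(x_1)\times C_{P_2}(x_2)$ consists of all pairs $(\gamma_i,\delta_j)$, so each component contribution is repeated once for every coset in the other factor, giving
$$\operatorname{tr}_{C_P(x)}^P(y_1,y_2)=\bigl(\operatorname{tr}_{C_{P_1}(x_1)}^{P_1}(y_1)^{|x_2^{P_2}|},\ \operatorname{tr}_{C_{P_2}(x_2)}^{P_2}(y_2)^{|x_1^{P_1}|}\bigr).$$
These exponents are the crux of the whole argument: if neither $x_1$ nor $x_2$ is central both exponents are divisible by $p$, the transfer lands in the Frattini subgroup, and the vertex $(a_1,a_2)$ has no outgoing edges at all. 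Hence your characterization ``edge $(a_1,a_2)\to(b_1,b_2)$ iff $a_i\to b_i$ for some $i$'' is wrong, and the hypothesis you never invoke---that solvability of $\HH^1(kP_i)$ forces $Z(P_i)\subset\Phi(P_i)$, by the result of Briggs--Kessar--Linckelmann quoted in the paper---is exactly what is needed to kill the mixed edges (it guarantees that a preimage of a vertex with both coordinates nontrivial has both coordinates non-central).

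Second, your proposed sharpening ``for $p$ odd, $\Gamma(Q)$ is acyclic iff it has no edges'' is refuted by the paper's own examples: $\Gamma(\operatorname{UT}(3,3))$ is acyclic yet has eight edges (all out of $e$, none of them loops), and $\Gamma(M27)$ is acyclic with many more. Indeed, since a loop at any vertex already forces non-solvability for $p$ odd (the first reduction in the paper), your sharpening would say that $\mathfrak{h}$ is solvable only when it is abelian, which is far too strong; the plan of manufacturing a self-loop at $a$ from an edge $a\to b$ cannot succeed (in $\operatorname{UT}(3,3)$ there is an edge $e\to g$ but no loop at $e$). The paper instead works with the reduced graph $\bar\Gamma(P)$ and shows, using the exponent formula above together with $Z(P_i)\subset\Phi(P_i)$, that every edge of $\bar\Gamma(P)$ lies inside one of the two disjoint embedded subgraphs $\bar\Gamma(P_1)$, $\bar\Gamma(P_2)$; a cycle in the product graph therefore lives entirely in one factor.
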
 
We further have partial results for $p=2$. We define a second directed graph as follows:
\begin{defn} $\Gamma_2(P)$ is a directed graph with 
\begin{itemize} 
\item vertices labelled by $P / \Phi(P)$.
\item an edge $a \rightsquigarrow b$, when there are edges $ a\rightarrow b$ and $a\rightarrow ab$ in $\Gamma(P)$.
\end{itemize}
\end{defn}
Which allows us to state the following result
\begin{corollary} If $\Gamma_2(P)$ has a cycle, then $\HH^1(kP)$ is not solvable.
\end{corollary}
\section{A reminder on the first Hochschild cohomology}
We briefly recall the properties of the first Hochschild cohomology. Given a $k$-algebra $A$, From the Bar Resolution, we obtain the isomorphism:
$$\HH^1(A)\cong \Der(A) / \operatorname{InnDer}(A).$$
Where $\Der(A)$ is the set of $k$-linear Derivations on $A$, which forms a Lie subalgebra of $\mathfrak{gl}(A)$, and $\operatorname{InnDer}(A)$ is the subalgebra of derivations of the form
$$a \mapsto [x,a] = xa-ax.$$
One can note that for $f\in \Der(A)$ and $x \in A$ that
$$[f,[x,-]] = [f(x),-],$$
so that inner derivations form an ideal, and $\HH^1(A)$ inherits the structure of a Lie Algebra. For more details the reader may consult \cite{Witherspoon}.
\subsection{Properties of solvable Lie Algebras}
Recall that a Lie algebra $\mathfrak{g}$ is called solvable when its derived series $D^n(\mathfrak{g})$ vanishes for sufficiently large $n$. This is defined as follows:
$$D^0(\mathfrak{g}) = \mathfrak{g}, D^{i+1}(\mathfrak{g}) = [D^i(\mathfrak{g}),D^i(\mathfrak{g})].$$
The minimal $n \in \mathbb{N}$ for which $D^n(\mathfrak{g})$ is zero is called the derived length of $\mathfrak{g}$, which we write $\operatorname{dl}(\mathfrak{g})$. We record the following important property of solvable Lie algebras which we will use repeatedly.
\begin{proposition}\label{solvablequotient} If $\mathfrak{h} \subset \mathfrak{g}$ is a Lie ideal of $\mathfrak{g}$, and both $\mathfrak{h}$ and $\mathfrak{g}/\mathfrak{h}$ are solvable, then $\mathfrak{g}$ is solvable.  \end{proposition}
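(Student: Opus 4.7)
The plan is to use the quotient map $\pi : \mathfrak{g} \to \mathfrak{g}/\mathfrak{h}$ to transfer solvability information from $\mathfrak{g}/\mathfrak{h}$ back to $\mathfrak{g}$, and then finish using solvability of $\mathfrak{h}$. The backbone of the argument is the functorial fact that the derived series is preserved under Lie algebra homomorphisms: for any Lie algebra map $\phi : \mathfrak{a} \to \mathfrak{b}$, one has $\phi(D^i(\mathfrak{a})) = D^i(\phi(\mathfrak{a}))$, which follows by a quick induction on $i$ since $\phi$ preserves brackets and $D^{i+1}$ is defined as a bracket of $D^i$ with itself.

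First I would apply this to the surjection $\pi$ to obtain $\pi(D^i(\mathfrak{g})) = D^i(\mathfrak{g}/\mathfrak{h})$ for all $i$. By solvability of $\mathfrak{g}/\mathfrak{h}$ there is some $n$ with $D^n(\mathfrak{g}/\mathfrak{h}) = 0$, so $D^n(\mathfrak{g}) \subseteq \ker(\pi) = \mathfrak{h}$. Next, I would invoke the trivial but crucial monotonicity property: if $\mathfrak{a} \subseteq \mathfrak{b}$ are Lie subalgebras then $D^j(\mathfrak{a}) \subseteq D^j(\mathfrak{b})$ for every $j$, again by induction. Combined with the identity $D^{n+m}(\mathfrak{g}) = D^m(D^n(\mathfrak{g}))$, this yields
$$D^{n+m}(\mathfrak{g}) = D^m(D^n(\mathfrak{g})) \subseteq D^m(\mathfrak{h}).$$

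Finally, solvability of $\mathfrak{h}$ gives some $m$ with $D^m(\mathfrak{h}) = 0$, so $D^{n+m}(\mathfrak{g}) = 0$, i.e. $\mathfrak{g}$ is solvable with $\operatorname{dl}(\mathfrak{g}) \leq \operatorname{dl}(\mathfrak{h}) + \operatorname{dl}(\mathfrak{g}/\mathfrak{h})$. There is no genuine obstacle here; the only minor point worth being careful about is verifying the two inductive facts (preservation of $D^i$ under homomorphisms, and monotonicity of $D^i$ under inclusion), both of which follow immediately from the bilinearity of the bracket. The argument is really just chasing the derived series through the short exact sequence $0 \to \mathfrak{h} \to \mathfrak{g} \to \mathfrak{g}/\mathfrak{h} \to 0$.
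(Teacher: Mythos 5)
Your proposal is correct and follows essentially the same route as the paper: push the derived series through the surjection $\pi$ to conclude $D^n(\mathfrak{g}) \subseteq \mathfrak{h}$, then use monotonicity of the derived series under inclusion together with $D^{n+m}(\mathfrak{g}) = D^m(D^n(\mathfrak{g}))$ to finish via solvability of $\mathfrak{h}$. You even recover the bound $\operatorname{dl}(\mathfrak{g}) \leq \operatorname{dl}(\mathfrak{h}) + \operatorname{dl}(\mathfrak{g}/\mathfrak{h})$, which the paper records separately as Corollary~\ref{derlength}.
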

\begin{proof} 
We note that as $q: \mathfrak{g} \rightarrow \mathfrak{g}/ \mathfrak{h}$ is a surjective Lie algebra homomorphism, $q([\mathfrak{g}, \mathfrak{g}]) = [\mathfrak{g}/\mathfrak{h}, \mathfrak{g}/\mathfrak{h}]$, and moreover 
$$q(D^i(\mathfrak{g}))= D^i(\mathfrak{g}/\mathfrak{h}).$$
So then, as $\mathfrak{g}/ \mathfrak{h}$ is solvable, we see that for some $i$, $D^i(\mathfrak{g}/\mathfrak{h})$ vanishes, which tells us that $D^i( \mathfrak{g})$ lands in $\mathfrak{h}$. Then $D^{i}(\mathfrak{g}) \subset D^0(\mathfrak{h})$, and by induction
$$D^{i+n} ( \mathfrak{g}) \subset D^n(\mathfrak{h}),$$
so as $\mathfrak{h}$ is solvable, this also vanishes.
\end{proof}
\begin{corollary}\label{derlength}
If $\mathfrak{h}$ is a solvable ideal of the solvable Lie algebra $\mathfrak{g}$ then
$$\operatorname{dl}(\mathfrak{g}) \leq \operatorname{dl}(\mathfrak{h}) + \operatorname{dl}(\mathfrak{g}/\mathfrak{h}).$$
\end{corollary}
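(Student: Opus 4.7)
The plan is to essentially read off the bound from the proof of Proposition~\ref{solvablequotient}, since that argument already produces an explicit $n$ with $D^n(\mathfrak{g})=0$; I just need to track the indices carefully.

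First I would set $i = \operatorname{dl}(\mathfrak{g}/\mathfrak{h})$ and $j = \operatorname{dl}(\mathfrak{h})$, so that by definition $D^i(\mathfrak{g}/\mathfrak{h}) = 0$ and $D^j(\mathfrak{h}) = 0$. Then I would invoke the key identity established in the previous proof, namely that for the quotient map $q : \mathfrak{g} \to \mathfrak{g}/\mathfrak{h}$ one has $q(D^n(\mathfrak{g})) = D^n(\mathfrak{g}/\mathfrak{h})$ for all $n$. Applying this with $n = i$ gives $q(D^i(\mathfrak{g})) = 0$, equivalently $D^i(\mathfrak{g}) \subset \mathfrak{h}$.

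Next I would run the inductive inclusion $D^{i+n}(\mathfrak{g}) \subset D^n(\mathfrak{h})$, which is just the observation that the derived series is monotone under inclusion of subalgebras combined with the base case $D^i(\mathfrak{g}) \subset \mathfrak{h} = D^0(\mathfrak{h})$. Setting $n = j$ yields $D^{i+j}(\mathfrak{g}) \subset D^j(\mathfrak{h}) = 0$, from which $\operatorname{dl}(\mathfrak{g}) \leq i + j = \operatorname{dl}(\mathfrak{h}) + \operatorname{dl}(\mathfrak{g}/\mathfrak{h})$, as desired.

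There is essentially no obstacle here; the corollary is a bookkeeping refinement of the preceding proposition, and the only thing to be careful about is the direction of the inequality and the fact that the bound may fail to be sharp (since elements of $D^i(\mathfrak{g})$ landing deep inside $\mathfrak{h}$ could collapse sooner), but this does not affect the stated upper bound.
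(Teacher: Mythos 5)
Your argument is correct and is exactly the intended one: the paper gives no separate proof of this corollary precisely because it is read off from the proof of Proposition~\ref{solvablequotient}, where the inclusions $D^i(\mathfrak{g}) \subset \mathfrak{h}$ and $D^{i+n}(\mathfrak{g}) \subset D^n(\mathfrak{h})$ are already established. Your index bookkeeping with $i = \operatorname{dl}(\mathfrak{g}/\mathfrak{h})$ and $j = \operatorname{dl}(\mathfrak{h})$ is precisely what the paper intends.
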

A Lie algebra $\mathfrak{h}$ is called nilpotent when its lower central series $C^n(\mathfrak{h})$ vanishes for large $n$. This is defined as follows:
$$C^0(\mathfrak{h}) = \mathfrak{h}, C^n(\mathfrak{h}) = [\mathfrak{h}, C^{n-1}(\mathfrak{h})].$$
Should a Lie algebra $\mathfrak{g}$ be solvable, it is called strongly solvable when its derived algebra $D^1(\mathfrak{g})$ is nilpotent. More generally, we will say that $\mathfrak{g}$ is $n$-strongly solvable when $D^n(\mathfrak{g})$ is nilpotent. We will need the following results on $n$-strongly solvable Lie algebras.
\begin{proposition}\label{nstrongsolv}
Given a Lie algebra $\mathfrak{g}$ with a nilpotent ideal $\mathfrak{h}$, we have the following.
\begin{enumerate} 
\item If $\mathfrak{h}$ is nilpotent, and $D^n(\mathfrak{g}/\mathfrak{h})$ is zero then $\mathfrak{g}$ is $n$-strongly solvable.
\item If $\mathfrak{g}$ is $n$-strongly solvable then $\mathfrak{g}/\mathfrak{h}$ is $n$-strongly solvable
\end{enumerate}
\end{proposition}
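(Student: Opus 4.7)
My plan is to prove both parts by pushing the derived series through the quotient map $q : \mathfrak{g} \to \mathfrak{g}/\mathfrak{h}$ and then invoking the standard inheritance properties of nilpotent Lie algebras under subalgebras and quotients. As noted already in the proof of \Cref{solvablequotient}, surjectivity of $q$ gives $q(D^i(\mathfrak{g})) = D^i(\mathfrak{g}/\mathfrak{h})$ for every $i$, and I will use this freely.

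For (1), the assumption $D^n(\mathfrak{g}/\mathfrak{h}) = 0$ combined with the above identity forces $D^n(\mathfrak{g}) \subseteq \ker(q) = \mathfrak{h}$. Then $D^n(\mathfrak{g})$ is a Lie subalgebra of the nilpotent Lie algebra $\mathfrak{h}$; since the lower central series of a subalgebra is contained termwise in that of the ambient algebra, $D^n(\mathfrak{g})$ is itself nilpotent, which is exactly the definition of $n$-strong solvability of $\mathfrak{g}$.

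For (2), apply the same identity to see that $D^n(\mathfrak{g}/\mathfrak{h}) = q(D^n(\mathfrak{g}))$ is the image of the nilpotent Lie algebra $D^n(\mathfrak{g})$ under the Lie algebra homomorphism $q$ restricted to $D^n(\mathfrak{g})$. Since $q$ is a Lie algebra homomorphism, $q(C^i(D^n(\mathfrak{g}))) = C^i(q(D^n(\mathfrak{g})))$, so the lower central series of $q(D^n(\mathfrak{g}))$ terminates as soon as that of $D^n(\mathfrak{g})$ does, and hence $D^n(\mathfrak{g}/\mathfrak{h})$ is nilpotent. (Note that nilpotence of $\mathfrak{h}$ is not actually used here; the conclusion holds for any Lie ideal.)

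There is no real obstacle: both parts are essentially formal consequences of the functoriality of the derived and lower central series under surjective Lie algebra homomorphisms, together with the basic permanence of nilpotence under subalgebras (for (1)) and quotients (for (2)). The only point one must be careful about is to distinguish the two series — $D^\bullet$ for solvability and $C^\bullet$ for nilpotence — and to apply the correct permanence property in each case.
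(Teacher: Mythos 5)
Your proof is correct and follows essentially the same route as the paper: for (1) it deduces $D^n(\mathfrak{g})\subseteq\mathfrak{h}$ from $D^n(\mathfrak{g}/\mathfrak{h})=0$ and invokes the permanence of nilpotence under subalgebras, and for (2) it uses $q(D^n(\mathfrak{g}))=D^n(\mathfrak{g}/\mathfrak{h})$ together with the permanence of nilpotence under homomorphic images. Your parenthetical observation that nilpotence of $\mathfrak{h}$ is not needed for (2) is a small accurate bonus, but the argument is otherwise the same as the paper's.
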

\begin{proof}[Proof of 1.] As $D^n(\mathfrak{g}/\mathfrak{h})$ is zero, we see that $D^n(\mathfrak{g}) \subset \mathfrak{h}$. But $\mathfrak{h}$ is nilpotent, and subalgebras of nilpotent algebras are nilpotent.
\end{proof}
\begin{proof}[Proof of 2.] We noted before that, writing $q: \mathfrak{g} \rightarrow \mathfrak{g}/\mathfrak{h}$, we have that $q(D^n(\mathfrak{g})) = D^n(\mathfrak{g}/\mathfrak{h}, \mathfrak{g}/\mathfrak{h})$. Moreover, we know that homomorphic images of nilpotent algebras are nilpotent, so that as $D^n(\mathfrak{g})$ is nilpotent, so is $D^n(\mathfrak{g}/\mathfrak{h}, \mathfrak{g}/\mathfrak{h})$.
\end{proof}
\subsection{The Centralizer Decomposition of Hochschild cohomology}
Given a group $G$, with a complete set of conjugacy class representatives $x_1,\ldots, x_n$ there is a well known decomposition of the Hochschild cohomology of $\operatorname{HH}^*(kG)$ given as follows:
$$\HH^*(kG) = \bigoplus_{j=1,\ldots,n} \operatorname{H}^*(C_G(x_j), k),$$
in terms of the group cohomologies of centralizers of our conjugacy classes. In \cite{Liu} the isomorphism is made explicit. Recall that any cocycle in $\operatorname{H}^n(C_G(x_j),k)$ may be represented by a function $f:C_G(x_j)^n\rightarrow k$, emerging from the Bar Resolution. In the same vein, we can represent a cocycle in $\HH^n(kG)$ by a function $g:G^n \rightarrow kG$. Given such a function $f:C_G(x_j)^n \rightarrow k$, and a set $\gamma_1,\ldots, \gamma_m$ of right coset representatives of $C_G(x_j)$, where we assume that $\gamma_1 = e$, the corresponding coycle in Hochschild cohomology is given by:
$$\hat{f}(g_1,\ldots,g_n) = (-1)^{\frac{n(n+1)}{2}}g_1\ldots g_n \sum_{i=1}^m f(h'_{i,1}, \ldots, h'_{i,n}) \gamma_i^{-1}x_j\gamma_i,$$
where the $h'_{i,1}, \ldots h'_{i,n}$ are determined to be the unique elements of $C_G(x_j)$ satisfying 
$$\gamma_i g_n^{-1} = h'_{i,1}\gamma_{s_i^1},\; \gamma_{s_i^1}g_{n-1}^{-1}=h'_{i,2}\gamma_{s_i^2}, \;\ldots, \;\gamma_{s_i^{n-1}}g_1^{-1} = h'_{i,n} \gamma_{s^n_i}.$$
For our usage, we will only consider $G$ abelian, and $n=1$. This simplifies things a great deal. In this case, $G$ is the complete set of conjugacy class representatives, and $H^1(C_G(x), k) = H^1(G, k) = \operatorname{Hom}_{\textrm{Group}}(G, k)$. So then for each $x$, $C_G(x) = G$ only has one coset so $m=1$. Moreover, given $g\in G$, the corresponding value $h'_{1,1}=g^{-1}$. So our formula reduces to 
$$\hat{f}(g) = -gf(g^{-1})x = f(g)gx.$$
So then, the first Hochschild cohomology of $kA$ for $A$ an abelian group is
$$\HH^1(kA) = \bigoplus_{a\in A} \gphom(A,k).$$
For $\varphi \in \gphom(A,k)$ and $a\in A$, we write $(\varphi, a)$ for the element $\varphi$ in the direct summand labelled by $a$. From the above then,
$$(\phi,a)(z) = \phi(z)za,$$
 and from here we can compute commutators.
$$\begin{aligned} [(\phi,a),(\psi,b)](z) &= (\phi,a)(\psi,b)(z) - (\psi,b)(\phi,a)(z) \\
&=(\phi,a)(\psi(z)bz) - (\psi,b)(\phi(z)az)\\
&=\phi(bz)\psi(z)abz - \psi(az)\phi(z)abz \\
&=\phi(b)\psi(z)abz - \psi(a)\phi(z)abz + \phi(z)\psi(z)abz - \psi(z)\phi(z)abz \\
&= (\phi(b)\psi - \psi(a)\phi)(z)abz.
\end{aligned}$$
So, we have proven:
\begin{proposition} Given $\phi, \psi \in \gphom(A,k)$ and $a,b \in A$ we have
$$[(\phi,a), (\psi,b)] = (\phi(b)\psi - \psi(a)\phi, ab).$$
\end{proposition}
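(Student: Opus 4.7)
The proof is essentially the four-line calculation preceding the statement, so my approach is simply to organize that computation carefully. The starting point is the formula $(\phi,a)(z) = \phi(z)\,za$ derived above for the action of a Hochschild $1$-cocycle on $kA$ viewed as a derivation, together with the fact that the Lie bracket on $\Der(kA)$ is the commutator of endomorphisms. So I would evaluate $[(\phi,a),(\psi,b)]$ on a group element $z \in A$ by computing $(\phi,a)(\psi,b)(z) - (\psi,b)(\phi,a)(z)$ and then reading off the resulting element of $\gphom(A,k) \oplus kA$.

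Concretely, I would first compute $(\psi,b)(z) = \psi(z)\,bz$, and then apply $(\phi,a)$ using $k$-linearity (so that the scalar $\psi(z) \in k$ can be pulled out). This gives $\psi(z)\,(\phi,a)(bz) = \psi(z)\,\phi(bz)\,abz$, where commutativity of $A$ lets me re-order $b$, $z$, $a$ freely. The key algebraic input is that $\phi \colon A \to (k,+)$ is an additive homomorphism, so $\phi(bz) = \phi(b) + \phi(z)$; this splits the expression into $\phi(b)\psi(z)\,abz + \phi(z)\psi(z)\,abz$. Symmetry then gives $(\psi,b)(\phi,a)(z) = \psi(a)\phi(z)\,abz + \psi(z)\phi(z)\,abz$.

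Subtracting, the symmetric term $\phi(z)\psi(z)\,abz$ cancels, leaving
\[
[(\phi,a),(\psi,b)](z) = \bigl(\phi(b)\psi(z) - \psi(a)\phi(z)\bigr)\,abz.
\]
Recognizing the right-hand side as $(\phi(b)\psi - \psi(a)\phi)(z)\,z(ab)$, which is by definition the action of the cocycle $(\phi(b)\psi - \psi(a)\phi,\,ab)$ on $z$, finishes the proof.

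There is no real obstacle here: the only thing that must be handled carefully is the interpretation of the $k$-linear structure on $kA$, so that when the derivation $(\phi,a)$ acts on $\psi(z)\,bz$ the scalar $\psi(z) \in k$ is pulled out rather than having $(\phi,a)$ applied to it. Once the bookkeeping of additive-versus-multiplicative notation for $\phi,\psi$ is fixed, the identity is forced.
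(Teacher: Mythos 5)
Your proposal is correct and follows exactly the paper's own computation: evaluate the commutator on a group element $z$ using $(\phi,a)(z)=\phi(z)za$, pull out scalars by $k$-linearity, apply additivity of $\phi,\psi$ as homomorphisms into $(k,+)$ so that the symmetric term $\phi(z)\psi(z)\,abz$ cancels, and read off the result. No substantive difference from the argument displayed immediately before the proposition.
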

We record one more thing before proceeding. As $kG$ is a Symmetric Frobenius Algebra, $\HH^*(kG)$ comes equipped with an operator of degree $-1$ called the $\bv$-operator, denoted $\Delta$. In particular, given a derivation $f: kG \rightarrow kG$, $\Delta(f)$ is the unique element obeying the following identity
$$\langle f(x), 1 \rangle = \langle \Delta(f), x \rangle.$$
So, specialising again to the abelian case, given $\phi\in\gphom(A,k)$ and $a \in A$ we see that
$$\langle (\phi,a)(x), 1 \rangle = \langle \phi(x)ax, 1 \rangle.$$
This inner product vanishes apart from when $x=a^{-1}$, and then is equal to
$$\langle \phi(a^{-1}) , 1 \rangle = -\phi(a),$$
so that $\Delta((\phi,a)) = -\phi(a)a$. 
\section{The Lie Algebra $\HH^1(kP)$}
\subsection{The Jacobson Filtration}
Recall that the Jacobson Radical $J$ of $kG$ is the largest nilpotent right ideal of $kG$. In the case where $G$ is a $p$-group (which will be distinguished as the group $P$), this is the augmentation ideal, which is generated by elements of the form $g-1$ for $g \in P$. We record some facts from \cite{Wang}. Write
$$\Der_m(kG) := \{ f \in \Der(kG): f(kG) \subset J^m \}.$$
Crucially from Proposition 2.3 of \cite{Wang}, we know that $\Der_2(kG)$ is a nilpotent subalgebra. We also record Lemma 2.5 from their paper. Writing $G' = [G,G]$, we have a ring homomorphism
$$\begin{aligned}q : kG &\rightarrow kG/G' \\
g &\mapsto gG', \end{aligned}$$
whose kernel is spanned by elements of the form $g-1$, where $g\in G'$. It is noted in the Lemma, the kernel of this map is preserved by all derivations, so that it induces a well defined Lie Algebra homomorphism $\HH^1(kG) \rightarrow \HH^1(kG/G').$ First, we have an initial observation.
\begin{proposition} If $\HH^1(kG)$ is solvable, so is its image in $\HH^1(kG/G')$.\end{proposition}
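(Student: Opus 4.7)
The plan is to invoke the general principle that solvability is preserved under Lie algebra homomorphisms, applied to the map $q_* : \HH^1(kG) \to \HH^1(kG/G')$ whose existence is granted in the paragraph above. Since this map is a Lie algebra homomorphism, its image is a Lie subalgebra of $\HH^1(kG/G')$, and the claim is just that a homomorphic image of a solvable Lie algebra is solvable.

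To verify this, I would mimic the argument already used in \Cref{solvablequotient}: for any Lie algebra homomorphism $\phi : \mathfrak{g} \to \mathfrak{g}'$, an easy induction on $n$ using the identity $\phi([x,y]) = [\phi(x), \phi(y)]$ shows that
$$\phi(D^n(\mathfrak{g})) = D^n(\phi(\mathfrak{g})).$$
Applying this to $q_*$, if $D^n(\HH^1(kG)) = 0$ for some $n$, then $D^n(q_*(\HH^1(kG))) = q_*(D^n(\HH^1(kG))) = 0$, and so the image is solvable, with derived length at most that of $\HH^1(kG)$.

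There is no real obstacle here: the substantive fact (that $q$ induces a Lie algebra map on $\HH^1$) has already been quoted from \cite{Wang}, and the rest is the standard observation that solvability passes to quotients. The proof should be one or two lines in the paper.
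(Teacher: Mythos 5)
Your proposal is correct and matches the paper's argument: the paper disposes of this proposition in one line by noting it is ``a consequence of solvability descending to quotients,'' relying on the identity $q(D^i(\mathfrak{g})) = D^i(q(\mathfrak{g}))$ already established in the proof of \Cref{solvablequotient}. Your slightly more explicit induction is exactly the intended justification, so there is nothing to add.
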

This is a consequence of solvability descending to quotients. As the kernel of this map need not necessarily be solvable for $G$ a general group, we cannot state a converse. However, this will allow us to demonstrate insolvability for general groups. From here on, our results will be regarding $p$-groups. 
\begin{proposition} For $P$ a p-group and $q:kP \rightarrow kP/P'$ as above, we have $\operatorname{Ker}(q) \subset J^2$. \end{proposition}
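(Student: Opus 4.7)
The plan is to reduce the claim to showing that $g - 1 \in J^2$ for every $g \in P'$, and then to produce commutators of the form $(x-1)(y-1) - (y-1)(x-1)$ in $J^2$ that are essentially multiples of $[x,y] - 1$.

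First I would use the fact noted just above the statement: since $P$ is a $p$-group, $J$ is the augmentation ideal, so $\operatorname{Ker}(q)$ is spanned by the elements $g - 1$ for $g \in P'$. It therefore suffices to prove that $g - 1 \in J^2$ for every $g \in P'$. The main observation is the commutator identity
\[
(x-1)(y-1) - (y-1)(x-1) = xy - yx,
\]
whose left-hand side is manifestly in $J^2$. Rewriting the right-hand side as
\[
xy - yx = (xyx^{-1}y^{-1} - 1)\,yx = ([x,y] - 1)\, yx,
\]
and using that $yx$ is a unit in $kP$ while $J^2$ is a two-sided ideal, gives $[x,y] - 1 \in J^2$ for every pair $x,y \in P$.

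Next I would promote this from single commutators to the whole derived subgroup. For arbitrary $g, h \in P$ one has the telescoping identity $gh - 1 = g(h-1) + (g-1)$, and $g^{-1} - 1 = -g^{-1}(g-1)$. Since $J^2$ is a two-sided ideal, both identities show that if $g - 1, h - 1 \in J^2$, then $gh - 1 \in J^2$ and $g^{-1} - 1 \in J^2$. Combined with the previous step, an easy induction on the word length in commutators shows $g - 1 \in J^2$ for every $g \in P'$, completing the proof.

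There is no real obstacle here; the only thing to watch is that the argument genuinely uses $P$ being a $p$-group (so that $J$ is the augmentation ideal and hence $\operatorname{Ker}(q)$ has the described spanning set) — for a general group $G$ the analogous statement about the Jacobson radical would require additional input.
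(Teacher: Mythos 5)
Your proof is correct and rests on the same key identity as the paper's, namely that $\bigl((x-1)(y-1)-(y-1)(x-1)\bigr)$ times a unit of $kP$ equals $[x,y]-1$; the only difference is that you spell out the word-length induction that the paper absorbs into the assertion that $\operatorname{Ker}(q)$ is the two-sided ideal generated by the elements $xyx^{-1}y^{-1}-1$. One small wording fix: $\operatorname{Ker}(q)$ is the two-sided ideal \emph{generated} by $\{g-1 : g\in P'\}$, not their linear span, but since $J^2$ is itself a two-sided ideal this does not affect your argument.
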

\begin{proof}
We note that $\operatorname{Ker}(q)$ is a $2$-sided ideal, generated by elements of the form $xyx^{-1}y^{-1}-1$. $J$ is the augmentation ideal, and so a $2$-sided ideal, and therefore so is $J^2$. We see that
$$xyx^{-1}y^{-1}-1 = ((x-1)(y-1) - (y-1)(x-1))x^{-1}y^{-1},$$
which is clearly in $J^2$.
 \end{proof}
\begin{proposition} $\operatorname{InnDer}(kP) \subset \Der_2(kP)$. \end{proposition}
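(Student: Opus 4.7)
The plan is to piggy-back on the previous proposition, which identifies $\operatorname{Ker}(q) \subset J^2$ for the quotient map $q : kP \to kP/P'$. The key observation is that the target algebra $kP/P' \cong k[P/P']$ is commutative, since $P/P'$ is an abelian group.

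Given an inner derivation $\operatorname{ad}_x : a \mapsto xa - ax$ for some $x \in kP$, I would apply the ring homomorphism $q$ to its image. For any $a \in kP$,
\[
q(xa - ax) = q(x) q(a) - q(a) q(x) = 0
\]
by commutativity of $kP/P'$. Thus the image of every inner derivation is contained in $\operatorname{Ker}(q)$, and by the previous proposition $\operatorname{Ker}(q) \subset J^2$. This immediately yields $\operatorname{ad}_x(kP) \subset J^2$, i.e. $\operatorname{ad}_x \in \Der_2(kP)$.

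There is no real obstacle here; the content was packaged in the previous proposition. If one preferred to avoid invoking that proposition, one could instead work directly on basis elements $g, h \in P$ and use the factorisation $gh - hg = gh\bigl(1 - h^{-1}g^{-1}hg\bigr)$, then apply the identity from the previous proof to $h^{-1}g^{-1}hg - 1$ to place it in $J^2$, concluding by $k$-bilinearity. But the route through $q$ is cleaner and reuses exactly the statement just proved.
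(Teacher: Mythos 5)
Your proposal is correct and is essentially the paper's own proof: the paper likewise observes that $[x,a]=xa-ax\in\operatorname{Ker}(q)\subset J^2$, invoking the preceding proposition. You merely make explicit the (implicit) reason that $xa-ax$ lies in $\operatorname{Ker}(q)$, namely the commutativity of $kP/P'$.
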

\begin{proof} We know each inner derivation is of the form $[x,-]$ for some $x\in kP$ and that
$$[x,a] = xa-ax \in \operatorname{Ker}(q) \subset J^2,$$
so we are done.
\end{proof}
\begin{corollary} $\HH^1(kP)$ is solvable if and only $\operatorname{Der}(kP)$ is.\end{corollary}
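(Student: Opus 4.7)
The plan is to exploit the short exact sequence of Lie algebras
$$0 \to \operatorname{InnDer}(kP) \to \Der(kP) \to \HH^1(kP) \to 0$$
together with \Cref{solvablequotient}.

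For the forward direction, if $\Der(kP)$ is solvable, then its quotient $\HH^1(kP) = \Der(kP)/\operatorname{InnDer}(kP)$ is automatically solvable, since solvability passes to quotients (this is essentially the observation that surjective homomorphisms are compatible with the derived series, as used already in the proof of \Cref{solvablequotient}).

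For the backward direction, suppose $\HH^1(kP)$ is solvable. By \Cref{solvablequotient} applied to the above short exact sequence, it suffices to show that the ideal $\operatorname{InnDer}(kP)$ is solvable. For this I would invoke the preceding proposition, which gives $\operatorname{InnDer}(kP) \subset \Der_2(kP)$, combined with Proposition 2.3 of \cite{Wang}, which says that $\Der_2(kP)$ is nilpotent. A Lie subalgebra of a nilpotent Lie algebra is nilpotent, hence solvable, so $\operatorname{InnDer}(kP)$ is solvable and we conclude.

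There is no real obstacle here: the corollary is an essentially formal consequence of the two propositions immediately preceding it (that inner derivations land in $\Der_2(kP)$) together with the imported fact that $\Der_2(kP)$ is nilpotent. The only thing to take care of is making sure that $\operatorname{InnDer}(kP)$ is genuinely a Lie ideal of $\Der(kP)$ — but this is exactly the identity $[f,[x,-]] = [f(x),-]$ recorded in the preliminaries, so the application of \Cref{solvablequotient} is legitimate.
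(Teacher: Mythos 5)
Your argument is correct and is essentially the paper's own proof, just spelled out in more detail: both rely on $\operatorname{InnDer}(kP) \subset \Der_2(kP)$ with $\Der_2(kP)$ nilpotent (hence the ideal of inner derivations is solvable) and then apply \Cref{solvablequotient} to the quotient $\HH^1(kP) = \Der(kP)/\operatorname{InnDer}(kP)$. No gaps.
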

\begin{proof} We know that $\operatorname{InnDer}(kP)$ is contained in $\Der_2(kP)$ a nilpotent subalgebra. So by Proposition \ref{solvablequotient} we have our result.
\end{proof}
\begin{proposition} $\HH^1(kP)$ is solvable if and only if its image in $\HH^1(kP/P')$ is solvable.
\end{proposition}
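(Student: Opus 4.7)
The forward direction is immediate: the image of a solvable Lie algebra under a Lie algebra homomorphism is solvable (the derived series maps onto the derived series, as in the proof of Proposition~\ref{solvablequotient}). For the converse, the natural strategy is to reduce from $\HH^1(kP)$ to $\Der(kP)$ via the preceding corollary, and then compare $\Der(kP)$ to its image by analysing the kernel of the induced map on derivations.

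First I would identify the image explicitly. Since $P/P'$ is abelian, $kP/P'$ is commutative, so $\operatorname{InnDer}(kP/P') = 0$ and $\HH^1(kP/P') = \Der(kP/P')$. The map $\HH^1(kP) \to \HH^1(kP/P')$ is therefore induced by $\Phi:\Der(kP) \to \Der(kP/P')$, $f \mapsto \bar f$, whose kernel is
$$K := \{\, f \in \Der(kP) : f(kP) \subset \operatorname{Ker}(q) \,\}.$$
By the previous proposition $\operatorname{Ker}(q) \subset J^2$, so $K \subset \Der_2(kP)$, which is nilpotent by Proposition 2.3 of \cite{Wang}. Hence $K$ is itself nilpotent, and in particular solvable. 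Moreover $\operatorname{InnDer}(kP) \subset K$, since each $[x,y] = xy - yx$ vanishes in the commutative algebra $kP/P'$. Consequently the image of $\HH^1(kP)$ in $\HH^1(kP/P')$ is precisely $\Der(kP)/K$.

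To conclude, I would apply Proposition~\ref{solvablequotient} to the solvable ideal $K \subset \Der(kP)$: since $K$ is solvable, $\Der(kP)$ is solvable if and only if $\Der(kP)/K$ is. Combined with the preceding corollary---$\HH^1(kP)$ is solvable iff $\Der(kP)$ is---and the identification of $\Der(kP)/K$ with the image of $\HH^1(kP)$ in $\HH^1(kP/P')$, the desired equivalence follows. The main point to check carefully is the containment $\operatorname{InnDer}(kP) \subseteq K$, so that the image really equals $\Der(kP)/K$ rather than some strictly larger quotient; once this is in hand, the rest is an assembly of results already established in the excerpt.
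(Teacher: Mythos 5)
Your proposal is correct and follows essentially the same route as the paper: both identify the kernel of the induced map as the derivations landing in $\operatorname{Ker}(q) \subset J^2$, hence contained in the nilpotent algebra $\Der_2(kP)$, and then conclude via Proposition~\ref{solvablequotient}. The only cosmetic difference is that you pass through $\Der(kP)$ using the preceding corollary and note explicitly that $\HH^1(kP/P') = \Der(kP/P')$, whereas the paper works directly with $\operatorname{Ker}(\mathfrak{q}) \subset \Der_2(kP)/\operatorname{InnDer}(kP)$.
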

\begin{proof}
Given $f \in \HH^1(kP)$, the morphism $\mathfrak{q}: \HH^1(kP) \rightarrow \HH^1(kP/P')$ is given as follows:
$$\mathfrak{q}(f)(q(a)) = qf(a),$$
which is well defined as the subspace $\operatorname{Ker}(q)$ is preserved by all derivations. So then we have that a morphism $f$ is in  $\operatorname{Ker}(\mathfrak{q})$ when $f(kG) \subset \operatorname{Ker}(q)$. But we know $\operatorname{Ker}(q) \subset J^2$, so that $\operatorname{Ker}(\mathfrak{q})\subset \operatorname{Der}_2(kP) / \operatorname{InnDer}(kP)$. So as the kernel of $\mathfrak{q}$ is contained in a nilpotent subalgebra, it is nilpotent, and we obtain our result by using Proposition \ref{solvablequotient}.
\end{proof}
So then, we only need to understand the image of our map $\mathfrak{q}$. 

\subsection{Reduction to the Elementary Abelian case}
We have shown that solvability of $\HH^1(kP)$ is determined by the solvability of a subalgebra of its abelianisation, $\HH^1(kP/P')$. So that we will not have to worry about the order of elements in future, it turns out we can reduce to an elementary abelian group.
\begin{defn} Given a group $G$, its $p$-Frattini subgroup, denoted $\Phi_p(G)$ is the minimal normal subgroup for which $G/\Phi_p(G)$ is an elementary abelian $p$-group. \end{defn}
Note that this group is well defined, as given subgroups $A,B$ for which $G/A, G/B$ are elementary abelian $p$-groups, we have an injection
$$G/(A\cap B) \rightarrow G/A \times G/B,$$
ensuring that $G / (A \cap B)$ is an elementary abelian $p$-group. 
\begin{proposition} $\Phi_p(G)$ is the normal subgroup generated by $G^p$ and $[G,G]$.\end{proposition}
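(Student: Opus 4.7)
The plan is to prove a double containment between the two normal subgroups, using the defining property of $\Phi_p(G)$ as a minimal element in one direction, and verifying directly that the candidate subgroup has the required quotient property in the other. Let $N$ denote the normal subgroup generated by $G^p$ and $[G,G]$; note that $N$ is in fact normal because $[G,G]$ is normal and $g x^p g^{-1} = (g x g^{-1})^p$, so conjugation stabilizes $G^p$ as well.

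For the containment $N \subseteq \Phi_p(G)$, I would simply unpack what elementary abelian means. Since $G/\Phi_p(G)$ is abelian, the commutator subgroup $[G,G]$ is contained in $\Phi_p(G)$; since $G/\Phi_p(G)$ has exponent $p$, every element $g^p$ lies in $\Phi_p(G)$, so $G^p \subseteq \Phi_p(G)$. The subgroup generated by these two sets is therefore inside $\Phi_p(G)$.

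For the reverse containment $\Phi_p(G) \subseteq N$, the strategy is to check that $G/N$ is itself an elementary abelian $p$-group and then invoke the minimality of $\Phi_p(G)$. Since $[G,G] \subseteq N$, the quotient $G/N$ is abelian. Since $G^p \subseteq N$, every element of $G/N$ satisfies $\bar{g}^p = \bar{1}$, so $G/N$ has exponent dividing $p$. Together these two facts exhibit $G/N$ as an elementary abelian $p$-group, and hence $\Phi_p(G) \subseteq N$ by the minimality already established in the preceding discussion.

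There is no real obstacle here; the proof is essentially a formal unwinding of definitions, and the only small subtlety worth flagging is the normality of $G^p$ under conjugation, which is what makes $N$ a well defined normal subgroup in the first place.
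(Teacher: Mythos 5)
Your proof is correct and takes essentially the same route as the paper: both directions are the same double containment, with one side following from $G/\Phi_p(G)$ being elementary abelian and the other from checking that $G/N$ is elementary abelian and invoking minimality. Your added remark on why $N$ is normal is a harmless (and reasonable) extra detail the paper leaves implicit.
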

\begin{proof} We note that as $G/\Phi_P(G)$ is an elementary abelian $p$-group, we must have $G^p, [G,G] \subset \Phi_p(G)$. So it only remains to show that the quotient of $G$ by this subgroup is an elementary abelian $p$-group. But it is surely abelian, as we have quotiented by $[G,G]$, and every element has order dividing $p$, as we have quotiented by the subgroup generated by $G^p$. \end{proof}
For more details, the reader can consult \cite{Gruenberg}.
\begin{proposition} The map $kG \rightarrow kG/\Phi_p(G)$ induces a Lie algebra map $\HH^1(kG)\rightarrow \HH^1(kG/\Phi_p(G))$, and the solvability of $\HH^1(kG)$ implies the solvability of its image in $\HH^1(kG/\Phi_p(G))$. Moreover, if $G$ is a $p$-group, which we write as $P$, $\HH^1(kP)$ is solvable if and only if its image in $\HH^1(kP/\Phi(P))$ is solvable. \end{proposition}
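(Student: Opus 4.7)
My plan is to build the proposition in three stages, paralleling the preceding treatment of $kG \to kG/G'$. For the first stage (existence of the induced map), I would factor the quotient as $kG \twoheadrightarrow kG/G' \twoheadrightarrow kG/\Phi_p(G)$. The first surjection already induces a Lie algebra map on $\HH^1$ by Lemma 2.5 of \cite{Wang}, so only the second needs work. Its kernel is the ideal of the commutative ring $k[G/G']$ generated by $\bar h - 1$ with $\bar h \in \Phi_p(G)/G'$; since $G/G'$ is abelian, $\Phi_p(G)/G'$ is precisely the subgroup of $p$-th powers, so the ideal is generated by the elements $\bar h^p - 1$. The Leibniz rule combined with commutativity gives
$$f(\bar h^p) = \sum_{i=0}^{p-1} \bar h^i f(\bar h) \bar h^{p-1-i} = p\, \bar h^{p-1} f(\bar h) = 0$$
in characteristic $p$, for any derivation $f$. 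Hence $f$ annihilates the generators and so preserves the ideal; inner derivations descend to inner derivations, and composition yields the desired Lie algebra map $\mathfrak{q}: \HH^1(kG) \to \HH^1(kG/\Phi_p(G))$.

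The second stage is immediate: the image of $\mathfrak{q}$ is a quotient of $\HH^1(kG)$ by $\operatorname{Ker}(\mathfrak{q})$, and quotients of solvable Lie algebras are solvable.

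For the third stage (converse in the $p$-group case) I would mirror the $G'$-proposition. First show $\operatorname{Ker}(q) \subseteq J^2$: the normal generators of $\Phi_p(P)$ are $h^p$ and $[h_1, h_2]$; the commutator case is already handled, $h^p - 1 = (h-1)^p \in J^p \subseteq J^2$ by the characteristic-$p$ binomial identities, and products and inverses of elements $g$ with $g - 1 \in J^2$ remain in $J^2$. Next, given $[f] \in \operatorname{Ker}(\mathfrak{q})$, lift a witness of $\bar f$ being inner in $kP/\Phi_p(P)$ to an inner derivation of $kP$ and subtract, obtaining a representative with $f(kP) \subseteq \operatorname{Ker}(q) \subseteq J^2$; that is, $f \in \Der_2(kP)$. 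Thus $\operatorname{Ker}(\mathfrak{q})$ sits inside the image of $\Der_2(kP)$ in $\HH^1(kP)$, which is nilpotent by Proposition 2.3 of \cite{Wang}. Proposition \ref{solvablequotient} applied to the short exact sequence $\operatorname{Ker}(\mathfrak{q}) \hookrightarrow \HH^1(kP) \twoheadrightarrow \operatorname{Im}(\mathfrak{q})$ then delivers solvability of $\HH^1(kP)$ from solvability of its image.

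The only delicate point is preservation of $\operatorname{Ker}(q)$ by derivations in Stage 1. Factoring through $kG/G'$ is what makes it clean: it reduces the question to $p$-th power generators in a commutative ring, where a single characteristic-$p$ identity suffices, rather than requiring a mixed commutator and $p$-th power calculation directly in the noncommutative $kG$.
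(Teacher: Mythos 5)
Your proposal is correct and follows essentially the same route as the paper: factor $kG \to kG/G' \to kG/\Phi_p(G)$, use the characteristic-$p$ identity $f(z^p)=pz^{p-1}f(z)=0$ to see that derivations preserve the kernel of the second map, and for $p$-groups observe that both kernels land in $J^2$, so $\operatorname{Ker}(\mathfrak{q})$ is contained in the (nilpotent) image of $\Der_2(kP)$ and Proposition \ref{solvablequotient} applies. Your version is if anything slightly more careful than the paper's (quotienting by all of $(G/G')^p$ at once rather than one central $p$-th power at a time, and explicitly lifting the inner witness when identifying $\operatorname{Ker}(\mathfrak{q})$), but these are presentational rather than substantive differences.
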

\begin{proof} 
Suppose we have $z\in Z(G)$ so that $z^p\not=0$. Then for any derivation $f:kG \rightarrow kG$, we have
$$f(z^p) = pz^{p-1}f(z)=0,$$
so that all derivations vanish on $Z(G)^p$. Write the ring homomorphism $r: kG \rightarrow kG/\langle z^p \rangle$. The kernel of this map is spanned by elements of the form $(z^{np}-1)a$, for $a\in G$. We see that
$$f((z^{np}-1)a)= (z^{np}-1)f(a) \in \operatorname{Ker}(r),$$
so that all derivations preserve the kernel of $r$. Therefore we get a well defined Lie Algebra homomorphism $\mathfrak{r}: \operatorname{HH}^1(kG) \rightarrow \operatorname{HH}^1(k(G/\langle z^p \rangle))$. The kernel of this map consists of those derivations whose image lands in the kernel of $r$. We note that in the situation where $G$ is a $p$-group $P$,
$$(z^{np}-1) = (z^n-1)^p \in J^2,$$
so that $\operatorname{Ker}(\mathfrak{r}) \subset \Der_2(kP)$. So then $\HH^1(kP)$ is solvable if and only if its image in $\HH^1(k(P/\langle z^p \rangle))$ is solvable. Note that from our previous proposition, writing $q:G\rightarrow G/G'$, we have $\Phi_p(G) = q^{-1}((G/G')^p)$. So we get morphisms
$$\HH^1(kG) \rightarrow \HH^1(kG/G') \rightarrow \HH^1(k((G/G')/(G/G')^p)) \cong \HH^1(kG/\Phi_p(G)).$$
Then solvability of $\HH^1(kG)$ implies solvability of the image of this compositon. When $G$ is a $p$-group, the kernel of both of these maps are solvable, which implies solvability of the kernel of the composition. So then, $\HH^1(kP)$ is solvable if and only its in image in $\HH^1(kP/\Phi(P))$ is solvable.
\end{proof}

\subsection{Transfer for Groups and Morphisms}
Recall that given groups $H \subset G$, we have a morphism $\operatorname{tr}_{H}^G: G \rightarrow H/H'$, described as follows. Take a right transversal $t_1,\ldots, t_m$ of $H$ in $G$, and for each $g \in G$ define
$$\operatorname{tr}_{H}^G(g) = (\prod_{i=1}^m g_{[i]})H',$$
where $g_{[i]}$ is defined so that $t_i g = g_{[i]} t_j$, for some $g_{[i]} \in H$. For more details the reader can see \cite{Huppert}.
From our previous discussion, we know that given $\phi:C_G(x) \rightarrow k$, we obtain a derivation via
$$\hat{\phi}(g) = -g\sum_{i=1}^m \phi(g_{i}) \gamma_i^{-1}x\gamma_i,$$
where $g_i \in C_G(x)$ is so that $\gamma_i g^{-1} = g_i \gamma_{s_i}$. We see that applying $q$ we will get the image of our derivation in $\HH^1(kG/\Phi_p(G))$. This will be of the form
$$\mathfrak{q}(\hat{\phi})(q(g)) = q(g)q(x)\phi(\prod_{i=1}^m g_i),$$
where notably, $\prod_{i=1}^m g_i$ is the transfer map $\operatorname{tr}_{C_G(x)}^G(g^{-1})$. From the formula above, it can be seen that given $\phi:C_G(x)\rightarrow k$, which we regard as an element of $\HH^1(kG)$ via the centralizer decomposition, we have
$$\hat\phi \xmapsto{\mathfrak{q}} (-\phi\cdot \operatorname{tr}_{C_G(x)}^G, q(x)),$$
writing this categorically, the map
$$\HH^1(kG) \rightarrow \HH^1(kG/\Phi_p(G))$$
descends to direct summands as
$$\operatorname{H}^1(C_G(x), k) \xrightarrow{(-\operatorname{tr}_{C_G(x)}^G)^*} \operatorname{H}^1(G/\Phi_p(G), k) \cong \operatorname{H}^1(G, k).$$
Notably, the Lie algebra structure only depends on the image of this map, which is independent of the minus sign in our precomposition. Further, also note we have slightly abused notation here; $\operatorname{tr}_{C_G(x)}^G$ does not land in $C_G(x)$, but rather $C_G(x)/C_G(x)'$. As we have the isomorphisms
$$\operatorname{Hom}(C_G(x), k) \cong \operatorname{Hom}(C_G(x)/C_G(x)', k)\cong \operatorname{Hom}(C_G(x)/\Phi_p(C_G(x)), k)$$
this abuse is justified. From here on, we will slightly abuse notation by associating maps $\phi: G \rightarrow k$ with their factor ${\phi}: G / \Phi_p(G) \rightarrow k$.
\section{The image of $\mathfrak{q}$}
To proceed, we introduce some notation. Write $A := G/ \Phi_p(G)$,  $\mathfrak{g} := \HH^1(kA)$ and $\mathfrak{h} = \operatorname{Im}(\mathfrak{q})$. Write $\mathfrak{g}_a$ for the $a$-labelled summand of $\HH^1(kA)$ and $\mathfrak{h}_a := \mathfrak{g}_a \cap \mathfrak{h}$. We first note that $h_e := g_e$, as transfer from $G$ to $G$ is the identity. \\ 

We will give our criteria for solvability in terms of a directed graph. We create a directed graph with edges labelled by the elements of $A$. Then, we draw an edge from $a$ to $b$ when there exist elements $x, y \in G$ so that $q(x)=a$ and $q(y)=b$, and a group homomorphism $\alpha:C_G(x)\rightarrow k$, with $\nolinebreak \alpha \circ \operatorname{tr}_{C_G(x)}^G(y) \not=0$. Call this directed graph $\Gamma(G)$.

Recall that as $\gphom(C_G(x), k) \cong \gphom(C_G(x)/\Phi_p(C_G(x)), k)$, we see that such an $\alpha:C_G(x) \rightarrow k$ exists if and only if $\operatorname{tr}_{C_G(x)}^G(y) \not \in \Phi_p(C_G(x)/C_G(x)')$.
We will proceed to prove the following result.
\begin{proposition} If $p\not=2$, $\mathfrak{h}$ is solvable if and only if $\Gamma(G)$ is acyclic. \end{proposition}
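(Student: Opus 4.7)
The plan is to work with the $A$-grading on $\mathfrak{h} = \bigoplus_a \mathfrak{h}_a$ together with the bracket formula $[(\phi, a), (\psi, b)] = (\phi(b)\psi - \psi(a)\phi, ab)$, noting that the bracket vanishes unless there is an edge $a \to b$ or $b \to a$ in $\Gamma$. I split into the two directions, which require quite different arguments.

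For acyclic $\Rightarrow$ solvable, the plan is to build a level filtration on $\mathfrak{h}$. Assuming $\Gamma$ acyclic, define the depth $d(a) :=$ length of the longest directed path starting at $a$ (finite, with $L := \max_a d(a) = l(\Gamma)$), and for $\phi \in \gphom(A, k)$ set $\operatorname{lev}(\phi) := \max\{d(b) : \phi(b) \neq 0\}$. The key observation is that if $\phi \in \mathfrak{h}_a$ then $\phi(b) \neq 0$ forces the edge $a \to b$, hence $d(b) < d(a)$, so $\operatorname{lev}(\phi) \leq d(a) - 1$. Setting $J_k := \{(\phi, a) \in \mathfrak{h} : \operatorname{lev}(\phi) \leq k\}$, I would verify: (i) $J_k$ is a Lie ideal of $\mathfrak{h}$; (ii) $[J_k, J_k] \subseteq J_{k-1}$. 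For (ii): if $\phi(b) \neq 0$ then $d(b) \leq \operatorname{lev}(\phi) \leq k$, so $\operatorname{lev}(\psi) \leq d(b) - 1 \leq k-1$ and hence the term $\phi(b)\psi$ has level $\leq k-1$; the case $\psi(a) \neq 0$ is symmetric (with $\phi$ and $\psi$ swapping roles). For (i), the same inequalities for general $(\phi, a) \in \mathfrak{h}$ give a level bound of $k$ instead of $k-1$. Since $\mathfrak{h} = J_{L-1}$ and $J_{-1} = 0$, iterating (ii) gives $D^L(\mathfrak{h}) = 0$, yielding solvability with derived length at most $l(\Gamma)$.

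For cycle $\Rightarrow$ non-solvable, I first handle the case of a self-loop at some $a \in A$, i.e.\ some $\phi \in \mathfrak{h}_a$ with $\phi(a) \neq 0$. The same transfer-based function $\phi$ lies simultaneously in $\mathfrak{h}_a$ and $\mathfrak{h}_{a^{-1}}$ (since $C_G(x) = C_G(x^{-1})$ gives the same transfer), and $a^{-1} \neq a$ because $A$ is elementary abelian with $p \neq 2$. Setting $E = (\phi, a)$ and $F = (\phi, a^{-1})$, direct computation gives
\[
[E, F] = (\phi(a^{-1})\phi - \phi(a)\phi,\, e) = (-2\phi(a)\phi,\, e) =: H,
\]
and then $[H, E] = -2\phi(a)^2 E$ and $[H, F] = 2\phi(a)^2 F$ are nonzero multiples of $E$ and $F$; a rescaling produces an $\mathfrak{sl}_2$-triple in $\mathfrak{h}$, forcing non-solvability.

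For a general cycle of length $n \geq 2$, my plan is to reduce to the self-loop case via iterated brackets. Pass to a minimal (chord-free) cycle $a_1 \to \cdots \to a_n \to a_1$ and pick $\phi_i \in \mathfrak{h}_{a_i}$ with $\phi_i(a_{i+1}) \neq 0$ and $\phi_i$ vanishing on the other cycle vertices. First check that $s := \sum_i a_i \neq e$: if $s = e$, then $\phi_1(a_n) = -\sum_{i<n} \phi_1(a_i) = -\phi_1(a_2) \neq 0$, contradicting chord-freeness (or, for $n=2$, the absence of a self-loop at $a_1$). The plan is then to use iterated brackets along the cycle together with the reflected elements $(\phi_i, -a_i)$ coming from the $x \leftrightarrow x^{-1}$ symmetry to produce an element of some $\mathfrak{h}_c$ with nonzero self-evaluation. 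The main obstacle I anticipate is this last combinatorial step: the naive nested bracket $[\phi_1, [\phi_2, \ldots, [\phi_{n-1}, \phi_n]\ldots]]$ lands in $\mathfrak{h}_s$ but vanishes at $s$ by an unavoidable cancellation, so one must carefully combine it with brackets involving the reflected slots $-a_i$ to break the symmetry and produce the self-loop, relying critically on $p \neq 2$ in the scalar arithmetic.
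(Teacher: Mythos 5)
Your forward direction (acyclic implies solvable) is correct, and is a genuinely nicer packaging than the paper's: the level filtration $J_k$ built from the depth function $d$ replaces the paper's induction on minimal vertices by a single filtration argument and immediately gives the derived-length bound $\operatorname{dl}(\mathfrak{h})\le l(\Gamma(G))$, which the paper only extracts in a separate proposition. The self-loop case of the converse is also correct and is essentially the paper's argument: your triple $E,F,H$ spans the same perfect subalgebra as the paper's $\operatorname{Span}_k((\phi,a),(\phi,a^{-1}),(\phi,e))$.

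The genuine gap is the converse for a cycle of length $n\ge 2$ without self-loops. You explicitly leave the key combinatorial step as an ``anticipated obstacle,'' and the strategy you propose --- manufacture a self-loop by bracketing along the cycle, possibly using the reflected slots $(\phi_i,a_i^{-1})$ --- is not merely unfinished but appears structurally obstructed. Already for a $2$-cycle, once $\phi(a)=\psi(b)=0$, every bracket $[(\phi,a^{\pm1}),(\psi,b^{\pm1})]$ evaluates to zero on its own degree: for instance $[(\phi,a),(\psi,b)]=(\phi(b)\psi-\psi(a)\phi,ab)$ and $(\phi(b)\psi-\psi(a)\phi)(ab)=\phi(b)\psi(a)-\psi(a)\phi(b)=0$, and the cancellation persists under iteration since $\operatorname{Ad}((\phi,a))^n((\psi,b))=\phi(b)^{n-1}(\phi(b)\psi-n\psi(a)\phi,a^nb)$ also vanishes at $a^nb$. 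So no self-loop is ever produced this way, and the reduction to the self-loop case cannot be completed. The paper's route is different: it never looks for a self-loop, but shows the subalgebra generated by the cycle data is \emph{perfect}, using that $A$ is elementary abelian so $a^p=e$ and hence $\operatorname{Ad}((\phi,a))^p((\psi,b))=\phi(b)^p(\psi,b)$ is a nonzero multiple of $(\psi,b)$ lying in the derived subalgebra; after first disposing of $2$-cycles (so that at most one of $\phi_i(a_{i+1})$, $\phi_{i+1}(a_i)$ is nonzero for each cycle edge), applying this around the cycle puts every generator in the derived subalgebra. That $\operatorname{Ad}^p$ identity is the missing idea; without it, or a substitute for it, your proof of non-solvability is incomplete for every cycle of length at least $2$.
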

\begin{remark} When discussing cycles and paths in a directed graph $\Sigma$, we will always mean directed paths and directed cycles. That is, paths where edges are only able to be traversed from their start to their end, and not backwards. In this vein, that $\Sigma$ is acyclic means it contains no directed cycles. \end{remark}
This has the following consequence.
\begin{theorem} If $p \not=2$, $\HH^1(kP)$ is solvable if and only if $\Gamma(P)$ is acyclic. \end{theorem}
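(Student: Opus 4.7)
The plan is to read the theorem off directly from the preceding proposition, using the reduction chain already assembled earlier in the paper.

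The first step is to match up the notation. For a $p$-group $P$, the $p$-Frattini subgroup $\Phi_p(P)$ coincides with the ordinary Frattini subgroup $\Phi(P)$, because $\Phi(P)$ of a $p$-group is generated by $P^p$ and $[P,P]$ and this is precisely how $\Phi_p(P)$ was described. The same identification holds for the subquotients $C_P(x)/C_P(x)'$ that appear in the edge condition, since these are also ($p$-quotients of) $p$-groups. Consequently the graph $\Gamma(P)$ from the introduction and the graph $\Gamma(G)$ of the preceding proposition, specialised to $G = P$, are the same.

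The second step is to invoke the reduction proposition from the Jacobson-filtration section: for a $p$-group $P$, $\HH^1(kP)$ is solvable if and only if its image $\mathfrak{h} = \operatorname{Im}(\mathfrak{q})$ in $\HH^1(kP/\Phi(P))$ is solvable. This in turn packages the two reductions that the author has already established, namely passage to the abelianisation and then to the elementary abelian Frattini quotient, each time arguing that the kernel lies inside the nilpotent subalgebra $\Der_2/\operatorname{InnDer}$ and then applying \Cref{solvablequotient}. Finally, I would apply the preceding proposition with $G = P$ (which requires $p\neq 2$) to conclude that $\mathfrak{h}$ is solvable if and only if $\Gamma(P)$ is acyclic.

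Chaining these two equivalences proves the theorem. The point worth emphasising is that all the substantive content sits in the preceding proposition; the theorem itself is a clean corollary once the Frattini reduction is combined with the graph-theoretic criterion, and so there is no real obstacle at this stage of the argument.
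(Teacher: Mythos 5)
Your proposal is correct and follows the paper's own route exactly: the paper states this theorem as an immediate consequence of the proposition that $\mathfrak{h}=\operatorname{Im}(\mathfrak{q})$ is solvable if and only if $\Gamma(G)$ is acyclic, combined with the earlier reduction that $\HH^1(kP)$ is solvable if and only if its image in $\HH^1(kP/\Phi(P))$ is, and your identification of $\Phi_p(P)$ with $\Phi(P)$ (and of the two descriptions of the edge condition via $\gphom(C_P(x),k)\cong\gphom(C_P(x)/\Phi_p(C_P(x)),k)$) is precisely the bookkeeping the paper also relies on. No gaps.
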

The proof of this is performed in the following sections. Before that, we define a reduced version of the graph which will be used later for precise statements about derived length.
\begin{defn} The reduced graph $\bar{\Gamma}(G)$ is a subgraph of $\Gamma(G)$ with the same vertices. Suppose there is an edge from $a$ to $b$ in $\Gamma(G)$. This edge is in $\bar{\Gamma}(G)$ exactly when $\mathfrak{h}_b\not=0$. \end{defn}

We breifly explain why the reduction is useful. We want that whenever there is an edge from $a$ to $b$ in $\bar\Gamma(G)$, but no loops at $a$, that $[\mathfrak{h}_a, \mathfrak{h}_b]\not=0$. Suppose we are in the scenario we just described. Then I must have $x,y \in G$ with $q(x)=a, q(y)=b$, and $\phi := \alpha \circ \operatorname{tr}_{C_G(x)}^G$ so that $\phi(y)\not=0$. Moreover, as there are no loops at $a$, I know that $\phi(x)=0$. If $\mathfrak{h}_b \not=0$, then there is some $y'$ with $q(y')=b$, and $\beta:{C_G(y')}\rightarrow G$ so that $\psi := \beta \circ \operatorname{tr}_{C_G(y')}^G \not =0$. Crucially, $ \phi$ and $\psi$ are maps $G \rightarrow k$ and so descend to maps $G/ \Phi_p(G) \rightarrow k$. Therefore, we have $\phi(y') = \phi(y) \not =0 $. Plugging this information into our formula for the Lie bracket, we see
$$[(\phi, a), (\psi, b)] = (\phi(b)\psi, ab) \not=0 $$
giving the result as needed. Moreover, the same formula holds for any $(\psi,b) \in \mathfrak{h}_b$ so that $[\mathfrak{h}_a, \mathfrak{h}_b] = \mathfrak{h}_ba$. For instance it is easy to see the following:
\begin{proposition} $\mathfrak{h}$ is abelian if and only if $\bar{\Gamma}(G)$ has no edges. \end{proposition}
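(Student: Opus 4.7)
The plan is to prove both directions using the Lie bracket formula
$$[(\phi, a), (\psi, b)] = (\phi(b)\psi - \psi(a)\phi, ab)$$
together with the observation (already noted) that $\mathfrak{h}_e = \mathfrak{g}_e$, so that every $\phi \in \gphom(A, k)$ gives an element $(\phi, e) \in \mathfrak{h}$. This identity summand will serve as a reservoir of test derivations.

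For the direction ``$\bar{\Gamma}(G)$ has no edges $\Rightarrow \mathfrak{h}$ abelian'', I would take nonzero $(\phi, a), (\psi, b) \in \mathfrak{h}$ and, via the bracket formula, reduce to showing $\phi(b) = 0$ and $\psi(a) = 0$. Suppose $\phi(b) \neq 0$. Since $\phi \in \mathfrak{h}_a$ is a linear combination of maps of the form $\alpha_i \circ \operatorname{tr}_{C_G(x_i)}^G$ with $q(x_i) = a$, at least one such summand must be nonzero at a lift of $b$, producing an edge $a \to b$ in $\Gamma(G)$. Since $\psi \neq 0$ forces $\mathfrak{h}_b \neq 0$, this edge survives in $\bar{\Gamma}(G)$, contradicting the hypothesis. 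The argument for $\psi(a) = 0$ is symmetric, and the bracket vanishes.

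For the direction ``$\mathfrak{h}$ abelian $\Rightarrow \bar{\Gamma}(G)$ has no edges'', I would argue contrapositively. An edge $a \to b$ in $\bar{\Gamma}(G)$ yields, by the definition of $\Gamma$, an element $(\phi, a) \in \mathfrak{h}_a$ with $\phi(b) \neq 0$, where $\phi = \alpha \circ \operatorname{tr}_{C_G(x)}^G$; and since $\mathfrak{h}_b \neq 0$, there is some nonzero $(\psi, b) \in \mathfrak{h}_b$. Rather than bracket these two elements directly, I would bracket $(\phi, e) \in \mathfrak{h}_e = \mathfrak{g}_e$ with $(\psi, b)$. Because $\psi$ is a group homomorphism we have $\psi(e) = 0$, so the bracket collapses to $(\phi(b)\psi, b)$, which is nonzero as $\phi(b) \neq 0$ and $\psi \neq 0$, contradicting abelianness.

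The main point that requires care is the forward direction: a naive bracket of $(\phi, a)$ with $(\psi, b)$ can vanish even when both elements are nonzero and $\phi(b) \neq 0$, due to cancellation between the two terms in the bracket formula (precisely when $\psi \in k^{*}\phi$ with $\psi(a)/\phi(b)$ matching the scalar). The device of sliding $\phi$ into the identity summand, permissible because $\mathfrak{h}_e = \mathfrak{g}_e$ is the whole of $\gphom(A,k)$, kills the offending term at once and makes the nonvanishing of the bracket transparent.
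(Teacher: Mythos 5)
Your proof is correct and follows essentially the route the paper intends: the paper offers no separate proof of this proposition, presenting it as an immediate consequence of the bracket formula $[(\phi,a),(\psi,b)] = (\phi(b)\psi - \psi(a)\phi, ab)$ and the definition of $\bar{\Gamma}(G)$. The one place you genuinely sharpen the paper's sketch is the non-abelian direction: the preceding discussion in the paper brackets $(\phi,a)$ against $(\psi,b)$ directly and needs the side hypothesis that there are no loops at $a$ (and, tacitly, that $\psi(a)=0$) before the bracket visibly survives, whereas your device of replacing $(\phi,a)$ by $(\phi,e) \in \mathfrak{h}_e = \mathfrak{g}_e$ kills the second term unconditionally, since $\psi(e)=0$ for any group homomorphism into $(k,+)$; this disposes of the loop case and the potential cancellation $\phi(b)\psi = \psi(a)\phi$ in one stroke. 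Your forward direction --- that $\phi(b)\neq 0$ for a nonzero $\phi \in \mathfrak{h}_a$ forces an edge $a \to b$ in $\Gamma(G)$ because $\mathfrak{h}_a$ is spanned by maps of the form $\alpha\circ\operatorname{tr}_{C_G(x)}^G$ with $q(x)=a$, and that this edge survives in $\bar{\Gamma}(G)$ because $\psi \neq 0$ gives $\mathfrak{h}_b \neq 0$ --- is exactly the intended reading of the definitions, and correctly reduces abelianness to checking homogeneous components.
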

But further, $\mathfrak{h}_b$ is non-zero exactly when $b$ has some outgoing edge. So then, we have the following.
\begin{proposition}\label{terminaledge} $\bar{\Gamma}(G)$ is obtained from $\Gamma(G)$ by removing terminal edges. 
\end{proposition}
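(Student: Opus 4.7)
The plan is to reduce the statement to the characterisation already sketched just before the proposition, namely that $\mathfrak{h}_b \neq 0$ precisely when the vertex $b$ has an outgoing edge in $\Gamma(G)$. Once this is established, removing terminal edges in the graph-theoretic sense (edges whose target vertex has no outgoing edges) is exactly the operation that cuts precisely the edges $a \to b$ with $\mathfrak{h}_b = 0$, which by definition is how $\bar\Gamma(G)$ is obtained from $\Gamma(G)$.

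So the first step is to prove the equivalence $\mathfrak{h}_b \neq 0 \iff b$ has an outgoing edge in $\Gamma(G)$. For the forward direction, an arbitrary nonzero element of $\mathfrak{h}_b$ comes, via the explicit description of $\mathfrak{q}$ worked out in the previous section, from some $y \in G$ with $q(y) = b$ and some $\phi \in \gphom(C_G(y), k)$ such that the composite $\phi \circ \operatorname{tr}_{C_G(y)}^G : G \to k$ is nonzero. Picking any $z \in G$ with $(\phi \circ \operatorname{tr}_{C_G(y)}^G)(z) \neq 0$ and setting $c = q(z)$ produces an outgoing edge $b \to c$ in $\Gamma(G)$ by the very definition of the edge set. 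For the reverse direction, an outgoing edge $b \to c$ provides, by definition, witnesses $y, z$ and $\alpha$ giving a nonzero $\alpha \circ \operatorname{tr}_{C_G(y)}^G$, whose image under $\mathfrak{q}$ lies in $\mathfrak{h}_b$ and is nonzero.

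With this equivalence in hand, the proposition follows by unwinding definitions: an edge $a \to b$ of $\Gamma(G)$ lies in $\bar\Gamma(G)$ iff $\mathfrak{h}_b \neq 0$ iff $b$ has some outgoing edge in $\Gamma(G)$, which is precisely the condition that the edge $a \to b$ is not terminal. I do not expect a serious obstacle here; the only point that requires care is to make clear that the identification of $\mathfrak{h}_b$ with the image of $\operatorname{H}^1(C_G(y), k)$ under $(-\operatorname{tr}_{C_G(y)}^G)^*$ (summed over $y$ with $q(y) = b$) is nonzero iff at least one such transfer map is itself nonzero, which is immediate from injectivity of each summand's contribution into $\gphom(A, k)$ after the identification $\gphom(C_G(y), k) \cong \gphom(C_G(y)/\Phi_p(C_G(y)), k)$ noted in the previous section.
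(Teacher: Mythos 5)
Your proof is correct and takes essentially the same route as the paper, whose justification for this proposition is precisely the preceding observation that $\mathfrak{h}_b \neq 0$ exactly when $b$ has an outgoing edge in $\Gamma(G)$, combined with the definition of $\bar{\Gamma}(G)$. (The closing appeal to injectivity is superfluous: all that is needed is that a sum of subspaces is nonzero iff at least one summand is, and that the image of a transfer-precomposition is nonzero iff the transfer lands outside $\Phi_p(C_G(y)/C_G(y)')$, which the paper has already recorded.)
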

\subsection{Reduction to the Kernel of $\Delta$}
 We proceed with a series of reductions. We first deal with the case where $\Gamma(G)$ contains a loop.
 \begin{proposition} Suppose $p\not=2$ and there is $a\in G$  and $\alpha:C_G(a) \rightarrow k$, where writing $\phi := -\alpha \circ \operatorname{tr}_{C_G(a)}^G$ we have $\phi(a)\not=0$. Then $\mathfrak{h}$ is not solvable. 
\end{proposition}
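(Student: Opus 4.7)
The plan is to exhibit a $p$-dimensional Witt-like subalgebra of $\mathfrak{h}$ and show it is perfect, hence non-solvable, when $p\neq 2$. After rescaling $\alpha$ I may assume $\phi(a)=1$; for $i=0,1,\ldots,p-1$ set $v_i := (\phi,a^i) \in \HH^1(kA)$, conflating $a\in G$ with its image in $A$. The elements $a^i$ are distinct in $A$ because $\phi(a)\neq 0$ forces $a\neq e$ and $A$ is elementary abelian of exponent $p$, so the $v_i$ lie in distinct summands of $\HH^1(kA)$ and are in particular linearly independent.

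The crux is to verify that every $v_i$ actually lies in $\mathfrak{h}$. For $i=0$ this is automatic because $\mathfrak{h}_e=\mathfrak{g}_e$. For $i\geq 1$ I exploit the containment $C_G(a)\subseteq C_G(a^i)$ to define $\alpha_i := \alpha\circ\operatorname{tr}_{C_G(a)}^{C_G(a^i)} \colon C_G(a^i)\to k$, which is a group homomorphism. Transitivity of transfer along the tower $C_G(a)\subseteq C_G(a^i)\subseteq G$ then gives
$$\alpha_i \circ \operatorname{tr}_{C_G(a^i)}^G \;=\; \alpha \circ \operatorname{tr}_{C_G(a)}^G \;=\; -\phi,$$
so the derivation associated to $\alpha_i$ via the centralizer decomposition is sent by $\mathfrak{q}$ to $(\phi,a^i)=v_i$.

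Once every $v_i$ is in $\mathfrak{h}$, the bracket formula from the previous section yields
$$[v_i,v_j] \;=\; (\phi(a^j)\phi - \phi(a^i)\phi,\, a^{i+j}) \;=\; (j-i)\, v_{i+j},$$
with indices modulo $p$, using $\phi(a^n)=n\phi(a)=n$ in $k$. Hence $\mathfrak{w}:=\langle v_0,\ldots,v_{p-1}\rangle$ is a Lie subalgebra of $\mathfrak{h}$ isomorphic to the Jacobson--Witt algebra $W(1,\underline{1})$ over $\mathbb{F}_p$. For $p\neq 2$ this subalgebra is perfect: each $v_i$ with $i\neq 0$ equals $i^{-1}[v_0,v_i]$, while $v_0 = -\tfrac{1}{2}[v_1,v_{p-1}]$, where the invertibility of $-2$ is precisely the hypothesis $p\neq 2$. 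A perfect Lie algebra is non-solvable and solvability descends to subalgebras, so $\mathfrak{h}$ is non-solvable.

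I expect the main obstacle to be the second paragraph: transporting the single assumed element $(\phi,a)$ into each of the other summands $\mathfrak{h}_{a^i}$. Transitivity of transfer along $C_G(a)\subseteq C_G(a^i)\subseteq G$ is the key tool that makes this possible. The restriction $p\neq 2$ enters only at the very end and is sharp, since for $p=2$ the algebra $W(1,\underline{1})$ collapses to the two-dimensional solvable ``$ax+b$'' Lie algebra, and indeed $\Gamma(C_2)$ has a loop while $\HH^1(kC_2)$ is solvable.
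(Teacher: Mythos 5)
Your proof is correct, and it proves something slightly stronger than the paper does, at the cost of one extra tool. The paper's own argument is leaner: it takes only the three elements $(\phi,e)$, $(\phi,a)$, $(\phi,a^{-1})$, whose membership in $\mathfrak{h}$ is immediate --- $\mathfrak{h}_e=\mathfrak{g}_e$ for the first, and $C_G(a)=C_G(a^{-1})$ \emph{on the nose} for the third, so no transitivity of transfer is needed --- and checks directly that their span is a perfect (essentially $\mathfrak{sl}_2$-like) subalgebra, with $p\neq 2$ entering through the coefficient $-2\phi(a)$ in $[(\phi,a),(\phi,a^{-1})]$. Your construction produces the full $p$-dimensional Witt algebra $\{v_0,\dots,v_{p-1}\}$ with $[v_i,v_j]=(j-i)v_{i+j}$; the paper's subalgebra is precisely the span of $v_0,v_1,v_{p-1}$ inside yours, and the two arguments localize the hypothesis $p\neq 2$ at the same point (inverting $-2$ to recover the degree-zero piece). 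The genuinely new content in your route is the second paragraph: placing $(\phi,a^i)\in\mathfrak{h}$ for all $i$ via $C_G(a)\subseteq C_G(a^i)$ and transitivity of the transfer along $C_G(a)\subseteq C_G(a^i)\subseteq G$, which is a correct standard fact but is avoidable here. What your version buys is a sharper structural conclusion --- a loop in $\Gamma(G)$ forces an explicit Witt subalgebra of $\mathfrak{h}$, not merely a perfect one --- while the paper's version buys economy and independence from transfer transitivity. Both are valid; if you keep your version, state the transitivity of transfer explicitly as a cited lemma, since it is the only step not already available from the formulas in the surrounding sections.
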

\begin{proof} 
So we know that $(\phi, a) \in \mathfrak{h}$, but then also $(\phi, a^{-1}) \in \mathfrak{h}$ as transfer relies only on the subgroup $C_G(z) = C_G(z^{-1})$. Further, we have $(\phi, e) \in \mathfrak{h}$ as $\mathfrak{h}_e =\mathfrak{g}_e$. Now
$$[(\phi, a^{\pm 1}), (\phi, e)] = (\pm\phi(a)\phi,a^{\pm 1}),$$
and
$$\begin{aligned}[(\phi,a), (\phi,a^{-1})] &= (\phi(a^{-1})\phi - \phi(a)\phi, e)\\
&=(-2\phi(a)\phi, e) .\end{aligned}$$
So then as $\phi(a)\not=0$, we see that the subspace $\operatorname{Span}_k((\phi, z), (\phi, a^{-1}), (\phi, e))$ is a perfect subalgebra. All subalgebras of a solvable algebras are solvable, and perfect algebras are not solvable. So $\mathfrak{h}$ may not be solvable.
\end{proof}
This proof is where our restriction that $p \not = 2$ arises from. We note that this is not true when $p=2$. Considering at $P := C_2$, and $\HH^1(\mathbb{F}_2P)$ we have a loop, but the Lie algebra is solvable. \\

So then, without restriction  of generality in the case where $p\not=2$, we can assume that $\phi(a)=0$ for all $\phi, a$ as above. Incidentally, this is exactly that $\mathfrak{h} \subset \operatorname{Ker}(\Delta)$.
\subsection{Reduction to the case ($\dagger$)}
We now consider the scenario where have a cycle of two edges. That is, we have $a, b \in G/ \Phi_p(G)$, an edge from $a$ to $b$, and from $b$ to $a$ in $\Gamma(G)$. Once we have eliminated these two cases, the structure of our Lie algebra becomes significantly simpler.
\begin{proposition} Suppose there are $a,b\in G/ \Phi_p(G)$, with preimages $x,y \in G$, and group homomorphisms $\alpha:C_G(x) \rightarrow k$, $\beta: C_G(y) \rightarrow k$, where writing $\phi := \alpha \circ \operatorname{tr}_{C_G(x)}^G$ and $\psi := \beta \circ \operatorname{tr}_{C_G(y)}^G$, we have
$$\phi(y) \not=0, \; \; \psi(x)\not=0,$$
then $\mathfrak{h}$ is not solvable.   \end{proposition}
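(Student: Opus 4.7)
My strategy is to exhibit a perfect, hence non-solvable, Lie subalgebra $L\subset\mathfrak{h}$, which will force $\mathfrak{h}$ to be non-solvable. Using the reduction of the previous subsection I may assume $\mathfrak{h}\subset\operatorname{Ker}(\Delta)$, so in particular $\phi(a)=0$ and $\psi(b)=0$. Write $\lambda:=\phi(b)\neq 0$, $\mu:=\psi(a)\neq 0$, set $X:=(\phi,a)$, $Y:=(\psi,b)$, and let $L$ be the Lie subalgebra of $\mathfrak{h}$ generated by $X$ and $Y$.

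The heart of the argument is a closed-form computation for iterated brackets. I would define $W_i:=\operatorname{ad}_X^i(Y)$ for $i\geq 0$ and show by induction, using the bracket formula $[(\phi',c),(\psi',d)]=(\phi'(d)\psi'-\psi'(c)\phi',cd)$ together with the identities $\phi(a^i b)=\lambda$ and $(\alpha\psi+\beta\phi)(a)=\alpha\mu$, that
$$W_i=(\lambda^i\psi-i\lambda^{i-1}\mu\phi,\,a^i b).$$
Specialising to $i=p$: since $A=G/\Phi_p(G)$ is elementary abelian we have $a^p=e$, and in characteristic $p$ the coefficient $p\lambda^{p-1}\mu$ vanishes; hence $W_p=(\lambda^p\psi,b)=\lambda^p Y$. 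A mirror-image induction on $\operatorname{ad}_Y^j(X)$, swapping the roles of $X,Y$, $\phi,\psi$, $a,b$, and $\lambda,\mu$, yields $\operatorname{ad}_Y^p(X)=\mu^p X$.

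Because $\lambda,\mu\in k^\times$, these relations invert to
$$Y=\lambda^{-p}[X,W_{p-1}],\qquad X=\mu^{-p}[Y,\operatorname{ad}_Y^{p-1}(X)],$$
placing both generators of $L$ inside $[L,L]$. Since $L$ is Lie-generated by $X$ and $Y$, this forces $L\subseteq[L,L]$, so $L=[L,L]$; that is, $L$ is perfect. As $X\neq 0$, $L$ is a nonzero perfect subalgebra of $\mathfrak{h}$, and therefore $\mathfrak{h}$ cannot be solvable.

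The main obstacle will be the inductive verification of the closed form for $W_i$: the clean two-term recurrence relies crucially on $\phi(a)=\psi(b)=0$, since otherwise extra cross-terms would prevent the isolation of a single monomial in $\lambda$ (respectively $\mu$) at step $p$, and the degeneration $W_p=\lambda^p Y$ would fail. The assumption $p\neq 2$ does not enter this argument directly but was already used in the previous reduction to $\operatorname{Ker}(\Delta)$.
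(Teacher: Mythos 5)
Your proposal is correct and follows essentially the same route as the paper: the same closed-form induction $\operatorname{Ad}((\phi,a))^n((\psi,b)) = \phi(b)^{n-1}(\phi(b)\psi - n\psi(a)\phi, a^nb)$, the same specialisation at $n=p$ using $a^p=e$ and $\operatorname{char}(k)=p$, and the same conclusion that the subalgebra generated by $(\phi,a)$ and $(\psi,b)$ is perfect. The paper likewise disposes of the case $a=b$ by invoking the preceding loop proposition, so your reduction to $\phi(a)=\psi(b)=0$ matches its argument exactly.
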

\begin{proof} 
If $a=b$ then we are in the situation of the last proposition, so we may assume that $a\not= b$ and moreover that no such $a$ exists. Thus $\phi(a) = \psi(b)=0$. We investigate the Lie subalgebra $\mathfrak{a}$ of $\mathfrak{h}$ generated by $(\phi, a)$, $(\psi,b)$. Then we claim that 
$$\operatorname{Ad}((\phi,a))^n((\psi,b)) = \phi(b)^{n-1}(\phi(b)\psi - n\psi(a)\phi, a^nb),$$
where recall we write $\operatorname{Ad}(f) = [f,-]$ for $f \in \mathfrak{h}$. We proceed by induction. For the base case $n=1$, we found earlier that
$$[(\phi, a), (\psi,b)] = (\phi(b)\psi - \psi(a)\phi, ab),$$
just as claimed. Now, suppose the claim is true for $n$. Then
$$\begin{aligned}\operatorname{Ad}^{n+1}((\phi,a))((\psi,b)) &= \operatorname{Ad}((\phi,a))(\operatorname{Ad}^{n}((\phi,a))((\psi,b))) \\
&= \operatorname{Ad}((\phi,a))(\phi(b)^{n-1}(\phi(b)\psi - n\psi(a)\phi, a^nb))\\
&= \phi(b)^{n-1}[(\phi, a), (\phi(b)\psi - n\psi(a)\phi, a^nb)] \\
&= \phi(b)^{n-1}(\phi(a^nb)(\phi(b)\psi-n\psi(a)\phi) - (\phi(b)\psi(a)-n\psi(a)\phi(a))\phi, a^{n+1}b) \\
&= \phi(b)^{n-1}(\phi(b)^2\psi - n\phi(b)\psi(a)\phi - \phi(b)\psi(a)\phi, a^{n+1}b) \\
&= \phi(b)^n(\phi(b)\psi - (n+1)\psi(a)\phi, a^{n+1}b),
\end{aligned}$$
as required. So notably,
$$\operatorname{Ad}((\phi,a))^p((\psi,b)) =(\phi(b)^p\psi, b),$$
as all elements in $A$ have order $p$. But then clearly $(\psi,b) \in [\mathfrak{a}, \mathfrak{a}]$. Likewise, we see $(\phi, a)\in [\mathfrak{a}, \mathfrak{a}]$. But as $\mathfrak{a}$ was the subalgebra generated by $(\psi,b)$ and $(\phi,a)$, we have $\mathfrak{a}\subset [\mathfrak{a}, \mathfrak{a}] $. So $\mathfrak{a}$ is perfect, and $\mathfrak{h}$ is not solvable.
\end{proof}
So, the only remaining case is that for each $a, b \in G/ \Phi_p(G)$, we may have an edge from $a$ to $b$, or an edge $b$ to $a$, but not both. In terms of elements, with $\alpha, \beta, \phi, \psi$ as above, at most one of $\phi(b), \psi(a) $ is non zero. We describe this as property $(\dagger)$. 
\subsection{The property $(\dagger)$}
For now, we write $a \rightarrow b$ to indicate an edge from $a$ to $b$ in our graph $\Gamma(G)$. 
\begin{proposition} Suppose that $\Gamma(G)$ contains a cycle. Then $\mathfrak{h}$ is not solvable. \end{proposition}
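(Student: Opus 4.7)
By the no-loop reduction and property $(\dagger)$, any cycle in $\Gamma(G)$ has length at least $3$; fix a shortest such cycle $a_1 \to a_2 \to \cdots \to a_n \to a_1$ with $n \ge 3$. The plan is to combine iterated Lie brackets around this cycle to produce two elements of $\mathfrak{h}$ exhibiting the precise 2-cycle configuration exploited in the previous proposition, and then rerun that proposition's argument verbatim.

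First I would observe that a shortest cycle is chordless: any edge $a_i \to a_j$ with $j \not\equiv i+1 \pmod n$ creates a strictly shorter cycle by short-circuiting the arcs from $a_i$ on. Combined with the no-loop hypothesis and $(\dagger)$, this lets me choose, for each cycle edge, a generator $\phi_i \in \gphom(A, k)$ with $(\phi_i, a_i) \in \mathfrak{h}$, $\phi_i(a_{i+1}) = c_i \neq 0$, and $\phi_i(a_j) = 0$ for every $j \not\equiv i+1 \pmod n$.

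The core calculation is an induction on the left-associated brackets of $e_j := (\phi_j, a_j)$,
$$w_i^{(k)} := [\cdots [[e_i, e_{i+1}], e_{i+2}], \cdots, e_{i+k-1}].$$
Using $[(\phi, a), (\psi, b)] = (\phi(b)\psi - \psi(a)\phi, ab)$, for $k \le n-1$ the second summand vanishes at every step: $\phi_{i+k}$ has its single non-zero value at $a_{i+k+1}$, which lies strictly outside the accumulated product $a_i a_{i+1} \cdots a_{i+k-1}$. This yields
$$w_i^{(k)} = c_i c_{i+1} \cdots c_{i+k-2} \cdot (\phi_{i+k-1}, a_i a_{i+1} \cdots a_{i+k-1})$$
for $k \le n-1$. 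Setting $\epsilon_i := w_i^{(n-1)}$ exhibits an element of $\mathfrak{h}$ proportional to $(\phi_{i-2}, A_i)$, where $A_i := b \cdot a_{i-1}^{-1}$, $b := a_1 \cdots a_n$, and all indices are read modulo $n$.

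Finally I would check that the pair $(\phi_{i-2}, A_i)$, $(\phi_{i-1}, A_{i+1})$ reproduces the 2-cycle configuration. Since $\phi_j(b) = \phi_j(a_{j+1}) = c_j$, a short calculation gives $\phi_{i-2}(A_i) = c_{i-2} - c_{i-2} = 0$ and $\phi_{i-1}(A_{i+1}) = 0$, while the crucial cross terms are $\phi_{i-2}(A_{i+1}) = c_{i-2} \neq 0$ and $\phi_{i-1}(A_i) = c_{i-1} \neq 0$. Distinctness $A_i \neq A_{i+1}$ follows from $a_{i-1} \neq a_i$, and at most one $A_i$ can equal $e$ (since $A_i = e$ forces $b = a_{i-1}$), so for $n \ge 3$ a valid base index always exists. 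The previous proposition's argument then applies to this pair, producing a perfect subalgebra inside $\mathfrak{h}$ and so contradicting solvability. The main obstacle I anticipate is the bookkeeping in the inductive bracket computation — verifying that only the telescoping first term survives at each step, with careful handling of the wrap-around at $k = n-1$ — together with the edge case $A_i = e$, which is handled by shifting the base index.
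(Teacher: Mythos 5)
Your argument is correct, but it takes a genuinely different and considerably longer route than the paper's. The paper never needs the cycle to be chordless: after the loop and two-cycle reductions it uses only $\phi_i(a_i)=0$ and, via $(\dagger)$, $\phi_{i+1}(a_i)=0$ for consecutive vertices, whence $[(\phi_i,a_i),(\phi_{i+1},a_{i+1})]=(\phi_i(a_{i+1})\phi_{i+1},a_ia_{i+1})$ and $\operatorname{Ad}((\phi_i,a_i))^p((\phi_{i+1},a_{i+1}))=\phi_i(a_{i+1})^p(\phi_{i+1},a_{i+1})$; thus every generator of the subalgebra $\mathfrak{b}$ generated by all $n$ elements $(\phi_i,a_i)$ already lies in $[\mathfrak{b},\mathfrak{b}]$, and $\mathfrak{b}$ is perfect in one step. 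You instead pass to a chordless minimal cycle, run a telescoping left-nested bracket around it to manufacture the pair $(\phi_{i-2},A_i)$, $(\phi_{i-1},A_{i+1})$ realizing the two-cycle configuration, and then invoke the previous proposition's computation. Your telescoping induction is sound (the second summand dies at each step precisely because $i+k+1\notin\{i,\dots,i+k-1\}$ modulo $n$ when $k\le n-2$), and the diagonal and cross evaluations check out; note that $A_i=e$ is in fact outright impossible, since it would force $c_{i-1}=\phi_{i-1}(b)=\phi_{i-1}(a_{i-1})=0$, so your base-shifting fallback is never needed. What your approach buys is a literal reduction to the already-proved two-cycle case; what it costs is the chordlessness hypothesis and the nested-bracket bookkeeping, neither of which the paper's direct $\operatorname{Ad}^p$ argument requires.
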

\begin{proof}
Suppose our graph contains a cycle, which we may assume is minimal, and of length greater than 2, so that we have $a_1\rightarrow a_2 \rightarrow \ldots\rightarrow a_n \rightarrow a_1$. This implies we have morphisms $(\phi_i, a_i)$ with the property that $\phi_i(a_{i+1})\not=0$, and that $\phi_n(a_1)\not=0$. Because of property $(\dagger)$, we know that $\phi_{i+1}(a_i)=0$, and that $\phi_i(a_i)=0$. As before, we consider the subalgebra $\mathfrak{b}$ generated by $\{(\phi_i, a_i): i=1,\ldots, n \}$. Notably,
$$[(\phi_i,a_i), (\phi_{i+1},a_{i+1})] = (\phi_i(a_{i+1})\phi_{i+1}, a_ia_{i+1}),$$
and so,
$$\operatorname{Ad}((\phi_i,a_i))^p(\phi_{i+1}, a_{i+1}) = \phi_i(a_{i+1})^p(\phi_{i+1}, a_{i+1})$$
So then clearly $(\phi_{i},a_i) \in [\mathfrak{b},\mathfrak{b}]$, for $1<i\leq n$. In exactly the same way,
$$\operatorname{Ad}(\phi_n,a_n)^p(\phi_1,a_1) = \phi_n(a_1)^p(\phi_1, a_1),$$
so $\mathfrak{b}$ is perfect, and we may not have that $\mathfrak{h}$ is solvable.
\end{proof}

\begin{proposition}\label{mainprop} Suppose $\Gamma(G)$ is acyclic. Then $\mathfrak{h}$ is solvable.\end{proposition}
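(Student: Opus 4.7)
My plan is to filter $\mathfrak{h}$ by the depth of supports of its functionals and show that each Lie bracket strictly decreases this depth, forcing $D^{\ell}(\mathfrak{h}) = 0$ for $\ell := l(\Gamma(G))$.

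Since $\Gamma(G)$ is acyclic, for every $a \in A$ the number $l(a) :=$ length of the longest directed path in $\Gamma(G)$ starting at $a$ is finite, and satisfies $l(c) \leq l(a) - 1$ whenever $a \to c$ is an edge (any path from $c$ prepended with $a \to c$ remains simple, or else $a$ would lie on a cycle). With $\ell := \max_a l(a) = l(\Gamma(G))$, I would define
$$F_k := \left\{\sum_a (\phi_a, a) \in \mathfrak{h} : \operatorname{supp}(\phi_a) \subset \{c \in A : l(c) < k\} \ \text{for every } a\right\},$$
using the unique decomposition $x = \sum_a (\phi_a, a)$ with $(\phi_a, a) \in \mathfrak{h}_a$. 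Clearly $F_0 = 0$, and one checks $F_\ell = \mathfrak{h}$: by the very definition of an edge in $\Gamma(G)$, each $\phi_a$ coming from a transfer map has support in $\{c : a \to c\} \subset \{c : l(c) \leq l(a) - 1 \leq \ell - 1\}$, and this is inherited by arbitrary linear combinations.

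The key inclusion to verify is $[F_k, F_k] \subset F_{k-1}$. By bilinearity, it suffices to consider summands $(\phi, a), (\psi, b) \in F_k$; the Lie bracket formula computed earlier gives $(\phi(b)\psi - \psi(a)\phi, ab)$. If $\phi(b) \neq 0$, then $b \in \operatorname{supp}(\phi) \subset \{c : l(c) < k\}$, so $l(b) \leq k - 1$; hence $\operatorname{supp}(\psi) \subset \{c : l(c) \leq l(b) - 1 \leq k - 2\}$. The case $\psi(a) \neq 0$ is symmetric. Thus in all cases the functional $\phi(b)\psi - \psi(a)\phi$ has support in $\{c : l(c) < k - 1\}$, placing the bracket in $F_{k-1}$. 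Iterating, $D^k(\mathfrak{h}) \subset F_{\ell - k}$, so $D^\ell(\mathfrak{h}) = 0$, and $\mathfrak{h}$ is solvable of derived length at most $\ell$.

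The main obstacle I anticipate is the support bound $\operatorname{supp}(\phi_a) \subset \{c : a \to c\}$ for arbitrary $(\phi_a, a) \in \mathfrak{h}_a$ rather than just for transfer generators; it holds because each generator has this support property (this is exactly how edges in $\Gamma(G)$ are defined) and the condition is closed under linear combinations. Property $(\dagger)$ from the earlier reductions is not essentially needed for the filtration argument itself, since each of the two terms in $\phi(b)\psi - \psi(a)\phi$ admits the support bound independently; its role was in the preceding combinatorial reductions that left acyclicity as the remaining case.
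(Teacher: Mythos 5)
Your proof is correct, and it takes a genuinely different route from the paper's. The paper argues by repeatedly peeling off a source of the DAG: it fixes a vertex $m$ with no incoming edges, shows $[\mathfrak{a},\mathfrak{a}]\subset\mathfrak{h}_{S\setminus\{m\}}$ for graded subalgebras $\mathfrak{a}\subset\mathfrak{h}_S$ (which requires a separate verification that $[\mathfrak{a},\mathfrak{a}]$ is again a direct sum of its graded pieces, plus a case analysis for brackets landing back in $\mathfrak{g}_m$ when $ab=m$), and iterates over a topological ordering of $A$; the sharp bound $\operatorname{dl}(\mathfrak{h})=l(\Gamma(G))$ is only extracted in the following proposition via the sets $T_n$ of vertices reachable by long paths. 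You instead filter by the height function $l(c)$ (longest path \emph{out of} $c$) and, crucially, track the supports of the functionals $\phi_a$ rather than the group labels $a$ of the summands. The two ingredients you use are both sound: the support bound $\operatorname{supp}(\phi_a)\subset\{c: a\to c\}$ holds for the transfer generators by the very definition of the edges of $\Gamma(G)$ and passes to linear combinations, and $l(c)\le l(a)-1$ for edges $a\to c$ is exactly acyclicity. Your verification of $[F_k,F_k]\subset F_{k-1}$ correctly treats the two terms $\phi(b)\psi$ and $\psi(a)\phi$ separately, which is why you do not need the no-loop and no-$2$-cycle reductions as standing hypotheses (they are subsumed by acyclicity anyway), and passing from single summands to general elements is harmless because supports of sums lie in unions of supports. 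What your approach buys is a one-step proof of both solvability and the upper bound $\operatorname{dl}(\mathfrak{h})\le l(\Gamma(G))$, with no need for the gradedness lemma; what the paper's approach buys is the exact computation $D^n(\mathfrak{h})=\mathfrak{h}_{T_n}$, i.e.\ the lower bound on the derived length as well, which your filtration alone does not give.
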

\begin{proof} Suppose $\Gamma(G)$ is acyclic. For $S \subset A$ write:
$$\mathfrak{h}_S := \bigoplus_{s \in S} \mathfrak{h}_s,$$
and suppose we have a subalgebra $\mathfrak{a} \subset \mathfrak{h}_S$, satisfying:
$$\mathfrak{a} = \bigoplus_{a \in A} \mathfrak{a} \cap \mathfrak{g}_a.$$
As $\Gamma(G)$ is acyclic, $S$ must contain a minimal element, by which we mean a vertex with no incoming edges. Let $m$ be such a minimal element. I seek to show that
$$[\mathfrak{a}, \mathfrak{a}] \subset \mathfrak{h}_{S \backslash \{m\}}$$
We decompose $\mathfrak{h}_S := \mathfrak{h}_{S \backslash \{m\}} \oplus \mathfrak{h}_m$, and consider the 3 cases of brackets. Because of our assumption that $\mathfrak{a}$ is the direct sum of it's graded parts, this will suffice. First note that by our previous work, we know that $\mathfrak{h}_m$ is abelian (as $\mathfrak{h} \subset \operatorname{Ker}(\Delta)$). So, take $(\phi,m) \in \mathfrak{h}_m \cap \mathfrak{a}$ and $(\psi, b) \in \mathfrak{h}_b  \cap \mathfrak{a}$, with $b \in S \backslash \{m \}$. Then as $m$ is minimal, we know that $\psi(m)=0$. So we have 
$$[(\phi,m), (\psi,b)] = (\phi(b)\psi, mb),$$
and if $\phi(b)=0$ we're done. If not then as $\mathfrak{a}$ is a subalgebra, this still lies in $\mathfrak{h}_S$, but not in $\mathfrak{h}_m$, as $b\not=e$ (or else $\phi(b)=0$).  \\
So then we reduce to the case where $(\phi, a), (\psi,b) \in \mathfrak{a}$, with $a,b \not= m$. Suppose that $ab=m$. Then we know that $\phi(a)=0$, and also that $\phi(m)=0$. So then $\phi(b)=0$. Similarly, $\psi(b)=0$ and $\psi(m)=0$, so $\psi(a)=0$. So then, 
$$[(\phi,a), (\psi,b)] =0,$$
and $[\mathfrak{a}, \mathfrak{a}] \subset \mathfrak{h}_{S \backslash \{ m \}}$, as claimed. \\
From here, note that if $\mathfrak{a}$ satisfies
$$\mathfrak{a} = \bigoplus_{a\in A} \mathfrak{a} \cap \mathfrak{g}_a,$$
that then $[\mathfrak{a}, \mathfrak{a}]$ also satisfies this. We know of course that $$\bigoplus_{a \in A}[\mathfrak{a}, \mathfrak{a}] \cap \mathfrak{g}_a \subset [\mathfrak{a} , \mathfrak{a}],$$
but we see
$$\begin{aligned} [\mathfrak{a}, \mathfrak{a}] &= \bigoplus_{a \in A} \bigoplus_{b\in A} [\mathfrak{a}\cap \mathfrak{g_a}, \mathfrak{a} \cap \mathfrak{g_b}] \\
&\subset \bigoplus_{a \in A} \bigoplus_{b\in A} [\mathfrak{a},\mathfrak{a}] \cap [\mathfrak{g}_{a}, \mathfrak{g}_b] \\
&\subset  \bigoplus_{a \in A} \bigoplus_{b\in A} [\mathfrak{a},\mathfrak{a}] \cap \mathfrak{g}_{ab} \subset \bigoplus_{a \in A}[\mathfrak{a}, \mathfrak{a}] \cap \mathfrak{g}_a\end{aligned}  . $$
So that we have equality.\\
Choose an ordering of $A$ so that $a_i$ is a minimal element amongst $A \backslash \{a_1, \ldots, a_{i-1} \}$. Therefore, using the previous part of this proof we see that $D^i([\mathfrak{h}, \mathfrak{h}]) \subset \mathfrak{h}_{A \backslash \{ a_1, \ldots, a_i \}}$. This of course vanishes for large $i$, so that $\mathfrak{h}$ is solvable.
\end{proof}
Given a directed graph $\Sigma$, write $l(\Sigma)$ for the longest directed path in $\Sigma$. From Proposition \ref{terminaledge}, we know that $l(\Gamma(G)) = l(\bar\Gamma(G))+1$. 
\begin{proposition} The derived length of $\mathfrak{h}$ is given by $l(\Gamma(G))$. \end{proposition}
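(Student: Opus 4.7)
The plan is to prove two matching inequalities; set $N := l(\Gamma(G))$.

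\textbf{Upper bound: $\operatorname{dl}(\mathfrak{h}) \leq N$.} I would sharpen the inductive step of Proposition \ref{mainprop}: in a single derivation of a graded subalgebra $\mathfrak{a} \subset \mathfrak{h}_S$, one can remove the entire set $M(S)$ of vertices minimal in $\Gamma(G)|_S$, not merely one vertex. That is, $[\mathfrak{a},\mathfrak{a}] \subset \mathfrak{h}_{S \setminus M(S)}$. The case analysis mirrors Proposition \ref{mainprop}, with the only new ingredient being: if $(\phi,a),(\psi,b) \in \mathfrak{a}$ satisfy $a,b \notin M(S)$ but $ab \in M(S)$, and $\phi(b) \neq 0$ (so by $(\dagger)$ also $\psi(a) = 0$), then $\phi(ab) = \phi(a) + \phi(b) = \phi(b) \neq 0$ (using $\phi(a) = 0$ from $\mathfrak{h} \subset \ker\Delta$), giving an edge $a \to ab$ in $\Gamma(G)$ with $a,ab \in S$, contradicting the minimality of $ab$. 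Let $L_k$ be the set of vertices whose longest $\Gamma(G)$-incoming path has exactly $k$ edges. Iterating, $D^k(\mathfrak{h}) \subset \bigoplus_{j \geq k}\mathfrak{h}_{L_j}$. Because any $v \in L_N$ must be a sink in $\Gamma(G)$ (else some path extends past length $N$), $\mathfrak{h}_{L_N} = 0$, so $D^N(\mathfrak{h}) = 0$.

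\textbf{Lower bound: $\operatorname{dl}(\mathfrak{h}) \geq N$.} Fix a longest directed path $v_0 \to v_1 \to \ldots \to v_N$ in $\Gamma(G)$. Since $e$ admits no incoming edges, $v_i \neq e$ for $i \geq 1$, and $\mathfrak{h}_{v_i} \neq 0$ for $i \leq N-1$ because $v_i$ sends an edge. For each $1 \leq j \leq N-1$ pick $\psi_j \in \mathfrak{h}_{v_j}$ with $\psi_j(v_{j+1}) \neq 0$; by choosing $\phi_e \in \mathfrak{h}_e$ with $\phi_e(v_j) \neq 0$ the identity $[(\phi_e,e),(\psi_j,v_j)] = \phi_e(v_j)(\psi_j,v_j)$ places each $(\psi_j,v_j)$ in $D^1(\mathfrak{h})$. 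Setting $w_{j,k} := v_j^{\binom{k-1}{0}} v_{j+1}^{\binom{k-1}{1}} \cdots v_{j+k-1}^{\binom{k-1}{k-1}}$, and defining recursively $X^{(1)}_j := (\psi_j,v_j)$ and $X^{(k+1)}_j := [X^{(k)}_j,X^{(k)}_{j+1}] \in D^{k+1}(\mathfrak{h})$, an induction on $k$ using the bracket formula gives $X^{(k)}_j = c_{j,k}(\psi_{j+k-1}, w_{j,k})$ for some nonzero scalar $c_{j,k}$. Taking $j = 1$, $k = N-1$ produces a nonzero element of $D^{N-1}(\mathfrak{h})$.

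The main obstacle is the inductive step of the lower bound, where one must verify that $[(\psi_{j+k-1},w_{j,k}),(\psi_{j+k},w_{j+1,k})]$ does not collapse. The coefficient $\psi_{j+k}(w_{j,k})$ vanishes because every factor $v_i$ of $w_{j,k}$ has $i < j+k$ and acyclicity forbids an edge $v_{j+k} \to v_i$; the coefficient $\psi_{j+k-1}(w_{j+1,k})$ equals $\psi_{j+k-1}(v_{j+k}) \neq 0$, since the only nonvanishing summand is that from $v_{j+k}$ (with multiplicity $\binom{k-1}{k-1} = 1$), all other terms being killed by the no-loop condition, acyclicity, or $(\dagger)$. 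The Pascal recursion $w_{j,k+1} = w_{j,k} \cdot w_{j+1,k}$ then yields the claimed closed form for $X^{(k+1)}_j$, and since the crucial multiplicity $\binom{k-1}{k-1} = 1$ is nonzero modulo any prime, the argument is insensitive to the value of $p$.
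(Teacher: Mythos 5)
Your argument is correct, and while your upper bound follows the paper's strategy, your lower bound is genuinely different. For the upper bound the paper works with the reduced graph $\bar\Gamma(G)$ and the sets $T_n$ of vertices receiving a path of length at least $n$; these are exactly your $\bigcup_{j\ge n}L_j$ up to the shift $l(\Gamma(G))=l(\bar\Gamma(G))+1$ of Proposition \ref{terminaledge}, and both proofs peel off the minimal layer at each derivation (your observation that all of $M(S)$ may be removed at once is implicit in the paper's induction). For the lower bound the paper proves the stronger exact statement $D^n(\mathfrak{h})=\mathfrak{h}_{T_n}$: whenever $a\in T_n$ has an edge to $b$, it applies $\operatorname{Ad}((\phi,a))$ exactly $p$ times, so that $a^p=e$ returns the bracket to the summand $\mathfrak{h}_b$ and $\operatorname{Ad}((\phi,a))^p((\psi,b))=\phi(b)^p(\psi,b)$ places the whole of $\mathfrak{h}_b$ inside $D^{n+1}(\mathfrak{h})$. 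You instead exhibit a single explicit nonzero element of $D^{N-1}(\mathfrak{h})$ via a balanced bracket tower whose group labels accumulate binomial multiplicities; the key cancellations check out ($\psi_{j+k}(w_{j,k})=0$ by acyclicity, and $\psi_{j+k-1}(w_{j+1,k})=\psi_{j+k-1}(v_{j+k})\neq 0$ since the remaining summands are killed by acyclicity or the no-loop condition), as does the Pascal identity $w_{j,k+1}=w_{j,k}\,w_{j+1,k}$. The paper's route buys a complete description of the derived series, which it reuses for the $n$-strong solvability results; yours buys independence from the relation $a^p=e$ at the cost of a more delicate computation. Two small points to add for completeness: state explicitly that acyclicity is in force throughout (the cyclic case, where both quantities are infinite, is disposed of by the preceding propositions, and acyclicity is also what guarantees $\phi(a)=0$ for $(\phi,a)\in\mathfrak{h}_a$, which you use freely), and treat the degenerate cases $N\le 1$ separately, since your recursion only begins at $X^{(1)}_j$.
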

\begin{proof} We prove that the derived length is $l(\bar\Gamma(G))+1$, which gives the claim. First note that if we have a cycle in $\Gamma(G)$, we must also have a cycle in $\bar\Gamma(G)$. If we have an edge coming out of $a \in G / \Phi_p(G)$, then this posits the existence of a non-zero $(\psi,a) \in \mathfrak{h}_a$, so that $\mathfrak{h}_a\not=0$. In any cycle every vertex must have an outgoing edge, and so any incoming edge in $\Gamma(G)$ will also be in $\bar\Gamma(G)$. If $\mathfrak{h}$ is not solvable, it has a cycle in $\Gamma(G)$ and so in $\bar\Gamma(G)$, so that the infinite length of a path and the infinite derived length agree. From here, suppose that $\Gamma(G)$ is acyclic. Define the following subsets
$$T_n := \{b \in G /\Phi_p(G): \textrm{ There exists a path in } \bar\Gamma(G) \textrm{ of length greater or equal to } n \textrm{ ending at } b \}.$$
We seek to show that $D^n(\mathfrak{h}):= \mathfrak{h}_{T_n}$. This is certainly true for $n=0$. So then we proceed by induction. Suppose the maximal length of path ending at $b$ is of length $n$. Then, $b$ is a minimal element in $T_n$, so that by the proof of our last result we know that 
$$D^{n+1}(\mathfrak{h}) \subset \mathfrak{h}_{T_{n+1}}.$$
Suppose that $b$ has a path of length at least $n+1$ terminating at $b$. Then there is $a\in T_n$ so that there is an edge from $a$ to $b$ in $\bar\Gamma(G)$. This tells us that there is $(\phi, a) \in \mathfrak{h}_a \subset D^n(\mathfrak{h})$ so that $\phi(b)\not=0$. Moreover, as $\Gamma(G)$ is acyclic we know that $\phi(a)=0$. Thus
$$[(\phi,a), (\psi, b)] = (\phi(b)\psi, ab),$$
and as before,
$$\operatorname{Ad}((\phi,a))^p((\psi,b)) = \phi(b)^p(\psi, b),$$
so that $(\psi,b) \in D^{n+1}(\mathfrak{h})$. As $\psi$ was arbitrary, we see that $\mathfrak{h}_b \subset D^{n+1}(\mathfrak{h})$. So $D^{n+1}(\mathfrak{h}) = \mathfrak{h}_{T_{n+1}}$. Thus the derived length is one greater than the length of a maximal path in $\bar\Gamma(G)$.
\end{proof}
From here on, our results will concern $p$-groups. Write $\ell\ell(kP)$ for the minimal $n>0$ for which $J^n =0$. This is known as the Loewy length of $kP$.
\begin{corollary} The derived length of $\HH^1(kP)$ is bounded above by 
$$\operatorname{log}_2(\ell\ell(kP)-1) + l(\Gamma(P)).$$
\end{corollary}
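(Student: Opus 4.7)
The strategy is to apply Corollary \ref{derlength} to the short exact sequence
$$0 \longrightarrow \operatorname{Ker}(\mathfrak{q}) \longrightarrow \HH^1(kP) \longrightarrow \mathfrak{h} \longrightarrow 0.$$
By the previous proposition $\operatorname{dl}(\mathfrak{h}) = l(\Gamma(P))$, while $\operatorname{Ker}(\mathfrak{q})$ is a subalgebra of $\Der_2(kP)/\operatorname{InnDer}(kP)$, itself a quotient of the nilpotent algebra $\Der_2(kP)$. Consequently $\operatorname{dl}(\operatorname{Ker}(\mathfrak{q})) \leq \operatorname{dl}(\Der_2(kP))$, and it suffices to bound the right hand side by $\log_2(\ell\ell(kP)-1)$.

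For this I would first sharpen the nilpotency statement by proving the filtration estimate $[\Der_i(kP), \Der_j(kP)] \subset \Der_{i+j-1}(kP)$. The preliminary step is that any $f \in \Der_i(kP)$ satisfies $f(J^m) \subset J^{m+i-1}$, which follows from expanding $f(x_1 \cdots x_m)$ by the Leibniz rule on a product of elements of $J$: each summand has one factor in $J^i$ and $m-1$ other factors in $J$. Applying this observation twice to the commutator $f \circ g - g \circ f$ yields the bracket bound. Iterating in $\Der_2(kP)$ gives $C^n(\Der_2(kP)) \subset \Der_{n+2}(kP)$, so the nilpotency class of $\Der_2(kP)$ is at most $\ell\ell(kP) - 2$.

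Finally, one converts nilpotency class into derived length via the standard estimate $D^k(\mathfrak{n}) \subset C^{2^k-1}(\mathfrak{n})$, proved inductively from $[C^i(\mathfrak{n}), C^j(\mathfrak{n})] \subset C^{i+j+1}(\mathfrak{n})$ (itself a short Jacobi-identity induction on $i$). Hence a nilpotent Lie algebra of class $c$ has derived length at most $\lceil \log_2(c+1) \rceil$, which feeds into Corollary \ref{derlength} to give $\operatorname{dl}(\HH^1(kP)) \leq \lceil \log_2(\ell\ell(kP)-1) \rceil + l(\Gamma(P))$. The only genuinely new input is the filtration estimate $[\Der_i, \Der_j] \subset \Der_{i+j-1}$, and this is the main place where care is needed; everything else is either standard Lie-algebra bookkeeping or has been established in the preceding sections.
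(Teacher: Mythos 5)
Your proposal is correct and follows essentially the same route as the paper: Corollary \ref{derlength} applied to the ideal $\operatorname{Ker}(\mathfrak{q})$ (bounded via $\Der_2(kP)$), the identity $\operatorname{dl}(\mathfrak{h}) = l(\Gamma(P))$ from the preceding proposition, and the filtration estimate $[\Der_m(kP), \Der_n(kP)] \subset \Der_{m+n-1}(kP)$, which the paper simply cites from its reference rather than reproving. The only cosmetic difference is that you pass through the nilpotency class of $\Der_2(kP)$ and the standard conversion $D^k(\mathfrak{n}) \subset C^{2^k-1}(\mathfrak{n})$, whereas the paper iterates the derived series directly in the filtration to get $D^n(\Der_2(kP)) \subset \Der_{2^n+1}(kP)$; both yield the same $\log_2(\ell\ell(kP)-1)$ bound.
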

\begin{proof} We know from Corollary \ref{derlength} that 
$$\operatorname{dl}(\HH^1(kP)) < \operatorname{dl}(\operatorname{Der}_2(kP)) + \operatorname{dl}(\mathfrak{h}).$$
Further, from $\cite{Wang}$, we know that
$$[\operatorname{Der}_m(kP), \operatorname{Der}_n(kP)] \subset \Der_{m+n-1}(kP),$$
so then by induction one can easily prove that $D^n(\Der_2(kP)) \subset \Der_{2^{n}+1}(kP)$. Of course, $\nolinebreak\Der_{\ell\ell(kP)}(kP)=0$, so we get our bound.
\end{proof}
\begin{proposition} If $l(\Gamma(P))<n+1$ then $\HH^1(kP)$ is $n$-strongly solvable. If $l(\Gamma(P)) > n+1$ then $\HH^1(P)$ is not $n$-strongly solvable.
\end{proposition}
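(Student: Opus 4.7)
The plan is to reduce to the image $\mathfrak{h}$ of $\mathfrak{q}$ and exploit the previously-established identities $\operatorname{dl}(\mathfrak{h}) = l(\Gamma(P))$ and $D^n(\mathfrak{h}) = \mathfrak{h}_{T_n}$, combined with the fact that $\ker(\mathfrak{q}) \subset \operatorname{Der}_2(kP)/\operatorname{InnDer}(kP)$ is a nilpotent ideal of $\HH^1(kP)$. For the forward direction, if $l(\Gamma(P)) < n+1$ then $\operatorname{dl}(\mathfrak{h}) \le n$, hence $D^n(\mathfrak{h}) = 0$. Applying Proposition \ref{nstrongsolv}(1) to $\mathfrak{g} = \HH^1(kP)$ with the nilpotent ideal $\ker(\mathfrak{q})$ immediately yields that $\HH^1(kP)$ is $n$-strongly solvable.

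For the converse, assume $l(\Gamma(P)) > n+1$, so that $l(\bar\Gamma(P)) \ge n+1$ by Proposition \ref{terminaledge}. I can therefore fix a directed path $a_0 \to a_1 \to \cdots \to a_{n+1}$ in $\bar\Gamma(P)$. Both $a_n$ and $a_{n+1}$ lie in $T_n$, so $\mathfrak{h}_{a_n}, \mathfrak{h}_{a_{n+1}} \subset D^n(\mathfrak{h})$. The edge $a_n \to a_{n+1}$ in $\bar\Gamma(P)$ supplies some $(\phi, a_n) \in \mathfrak{h}_{a_n}$ with $\phi(a_{n+1}) \ne 0$, and the very condition for that edge to survive into $\bar\Gamma(P)$ forces $\mathfrak{h}_{a_{n+1}} \ne 0$, so I can also fix a nonzero $(\psi, a_{n+1})$. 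Reusing the $\operatorname{Ad}$-iteration identity from the proof of the previous proposition (which needs only $\phi(a_n) = 0$, automatic from $\mathfrak{h} \subset \ker\Delta$, and $a_n^p = e$ in the elementary abelian quotient), I then get
$$\operatorname{Ad}((\phi, a_n))^p (\psi, a_{n+1}) = \phi(a_{n+1})^p (\psi, a_{n+1}) \ne 0,$$
and hence $\operatorname{Ad}((\phi, a_n))^{kp} (\psi, a_{n+1}) \ne 0$ for every $k \ge 0$. Thus the operator $\operatorname{Ad}((\phi, a_n))$, restricted to the subalgebra $D^n(\mathfrak{h})$, is not nilpotent, so $D^n(\mathfrak{h})$ itself cannot be nilpotent. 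Proposition \ref{nstrongsolv}(2) then rules out $n$-strong solvability of $\HH^1(kP)$.

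The main obstacle lies in the converse direction: ensuring that the witnesses $(\phi, a_n)$ and $(\psi, a_{n+1})$ actually live in $D^n(\mathfrak{h})$ rather than merely in $\mathfrak{h}$. This is precisely why the refined characterization $D^n(\mathfrak{h}) = \mathfrak{h}_{T_n}$ is indispensable, rather than just the coarser identity $\operatorname{dl}(\mathfrak{h}) = l(\Gamma(P))$: it locates the obstruction to $\operatorname{ad}$-nilpotence at exactly the correct depth of the derived series.
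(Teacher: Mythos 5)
Your proposal is correct and follows essentially the same route as the paper: the forward direction kills $D^n(\mathfrak{h})$ via the graph-length bound and applies Proposition \ref{nstrongsolv}(1) to the nilpotent kernel of $\mathfrak{q}$, and the converse locates a surviving edge $a_n \to a_{n+1}$ inside $D^n(\mathfrak{h}) = \mathfrak{h}_{T_n}$ and uses the identity $\operatorname{Ad}((\phi,a_n))^p(\psi,a_{n+1}) = \phi(a_{n+1})^p(\psi,a_{n+1})$ to obstruct nilpotence, exactly as the paper does with $\operatorname{Ad}(\mathfrak{h}_a)^p(\mathfrak{h}_b) = \mathfrak{h}_b$ before invoking Proposition \ref{nstrongsolv}(2). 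The only cosmetic difference is that you phrase the obstruction as non-nilpotence of the $\operatorname{ad}$ operator rather than as $\mathfrak{h}_b \subset C^m(D^n(\mathfrak{h}))$ for all $m$; these are the same computation.
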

\begin{proof} Again for ease we work with $\bar\Gamma(P)$. If $l(\bar\Gamma(P))<n$ then we see that $D^n(\mathfrak{h})$ is zero. So by Proposition \ref{nstrongsolv} we have that $\HH^1(kP)$ is $n$-strongly solvable. \\

If $l(\bar\Gamma(P))>n$, then we must have some $b \in T_{n+1}$, and so some $a \in T_n$ with an edge into $b$. But clearly, from our last proposition we see that
$$\operatorname{Ad}(\mathfrak{h}_a)^p(\mathfrak{h}_b) = \mathfrak{h}_b,$$
so that $\mathfrak{h}_b \subset C^m(D^n(\mathfrak{h}))$ for all $m$. So $\mathfrak{h}$ is not $n$-strongly solvable, and so $\HH^1(kP)$ is not $n$-strongly solvable.
\end{proof}
Write $\operatorname{ss-rank}(\mathfrak{g}) := \operatorname{min}\{n: \mathfrak{g} \textrm{ is } n \textrm{ strongly solvable} \}$, for the strongly-solvable rank of $\mathfrak{g}$. 
\begin{corollary} 
$$\operatorname{ss-rank}(\HH^1(kP)) \in \{l(\Gamma(P)), l(\Gamma(P))-1\}.$$
\end{corollary}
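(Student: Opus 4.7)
The plan is to extract upper and lower bounds on $\operatorname{ss-rank}(\HH^1(kP))$ directly from the two halves of the preceding proposition, and observe that these bounds differ by exactly $1$.

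Set $\ell := l(\Gamma(P))$ for brevity. First I would produce the upper bound by applying the first half of the previous proposition with $n = \ell$: the hypothesis $\ell < n+1 = \ell+1$ holds trivially, so $\HH^1(kP)$ is $\ell$-strongly solvable, giving $\operatorname{ss-rank}(\HH^1(kP)) \leq \ell$. Next I would produce the lower bound by applying the second half with $n = \ell - 2$: the hypothesis $\ell > n+1 = \ell - 1$ also holds trivially, so $\HH^1(kP)$ is not $(\ell-2)$-strongly solvable, giving $\operatorname{ss-rank}(\HH^1(kP)) \geq \ell - 1$. Combining the two bounds forces $\operatorname{ss-rank}(\HH^1(kP))$ into the two-element set $\{\ell-1, \ell\}$, which is exactly the claim.

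There is essentially no obstacle to this argument: the previous proposition is stated precisely so that the only case it leaves ambiguous is $n = \ell - 1$, and the corollary is simply a reformulation recording this gap. I would note briefly for completeness that the edge case $\ell = 0$ (when $\Gamma(P)$ has no edges) should be interpreted so that $\mathfrak{h}$ is abelian and $\Der_2(kP)$ nilpotent makes $\HH^1(kP)$ already strongly solvable, so the set $\{0, -1\}$ collapses to the sensible value; this is the only point where one has to take a moment's care, but it does not affect the generic statement.
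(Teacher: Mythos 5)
Your argument is correct and is precisely the intended derivation: the paper states this corollary without proof as an immediate consequence of the preceding proposition, and your instantiations $n = l(\Gamma(P))$ and $n = l(\Gamma(P))-2$ are exactly how the two bounds are meant to be read off. The only (trivial) point worth making explicit is that ruling out $n$-strong solvability for \emph{all} $n \leq l(\Gamma(P))-2$ follows by applying the second half of the proposition at each such $n$ (or by monotonicity of $n$-strong solvability in $n$), not just at $n = l(\Gamma(P))-2$.
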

\section{Examples}
Having done the hard work, we can now enjoy some pretty pictures.
\begin{example}[Elementary abelian] When $P$ is elementary abelian, $\mathfrak{h}_a := \mathfrak{g}_a$, and for each pair $a$, $b$, there is $\phi \in \mathfrak{h}_a$ so that $\phi(b)\not=0$, unless $b=e$. So $\Gamma(P)$ contains all edges other than those into $e$. When $P$ is abelian, we have exactly the same graph on $P / \Phi(P)$.
\end{example}

\begin{example}[$\operatorname{UT}(3,3)$] The smallest non-abelian examples are the two non-abelian groups of size 27. Choosing $P:=\operatorname{UT}(3,3)$ we get the following graph:

\[\begin{tikzcd}
	{ e} & { g} & {g^2} \\
	h & gh & {g^2h} \\
	{h^2} & {gh^2} & {g^2h^2}
	\arrow[from=1-1, to=1-2]
	\arrow[curve={height=-12pt}, from=1-1, to=1-3]
	\arrow[from=1-1, to=2-1]
	\arrow[from=1-1, to=2-2]
	\arrow[from=1-1, to=2-3]
	\arrow[curve={height=12pt}, from=1-1, to=3-1]
	\arrow[curve={height=6pt}, from=1-1, to=3-2]
	\arrow[curve={height=12pt}, from=1-1, to=3-3]
\end{tikzcd}\]
Which obviously has no cycles, and only paths of length 1. 
\end{example}
\begin{example}[$M27$] Now, choosing $P=M27$, the other smallest non-abelian example we get a slightly more complex graph. 
\[\begin{tikzcd}
	e && g && {g^2} \\
	\\
	h && gh && {g^2h} \\
	\\
	{h^2} && {gh^2} && {g^2h^2}
	\arrow[dashed, from=1-1, to=1-3]
	\arrow[curve={height=-18pt}, dashed, from=1-1, to=1-5]
	\arrow[dashed, from=1-1, to=3-1]
	\arrow[curve={height=-6pt}, dashed, from=1-1, to=3-3]
	\arrow[curve={height=-6pt}, dashed, from=1-1, to=3-5]
	\arrow[curve={height=24pt}, dashed, from=1-1, to=5-1]
	\arrow[curve={height=6pt}, dashed, from=1-1, to=5-3]
	\arrow[curve={height=12pt}, dashed, from=1-1, to=5-5]
	\arrow[from=1-3, to=3-1]
	\arrow[curve={height=-6pt}, from=1-3, to=3-3]
	\arrow[from=1-3, to=3-5]
	\arrow[from=1-3, to=5-1]
	\arrow[curve={height=12pt}, from=1-3, to=5-3]
	\arrow[from=1-3, to=5-5]
	\arrow[from=1-5, to=3-1]
	\arrow[curve={height=6pt}, from=1-5, to=3-3]
	\arrow[curve={height=6pt}, from=1-5, to=3-5]
	\arrow[curve={height=-18pt}, from=1-5, to=5-1]
	\arrow[from=1-5, to=5-3]
	\arrow[curve={height=-18pt}, from=1-5, to=5-5]
\end{tikzcd}\]
As $e$ is minimal, we have drawn the paths from $e$ as dotted to reduce clutter. As $\Gamma(P)$ has no cycles, $\HH^1(kP)$ is solvable.
\end{example}
\begin{example}[$\mathbb{Z}/9\mathbb{Z} \rtimes \mathbb{Z}/9\mathbb{Z}$] By this group we mean exactly the group with Small group identifier $(81,4)$. The graph $\Gamma(P)$ has $56$ edges. It can be seen simply by this number that there must be a cycle; however looking only at the top corner, we see the following,
\[\begin{tikzcd}
	e && g & \cdots \\
	\\
	h && \ddots \\
	\vdots
	\arrow[dashed, from=1-1, to=1-3]
	\arrow[dashed, from=1-1, to=3-1]
	\arrow[shift left=3, from=1-3, to=3-1]
	\arrow[from=3-1, to=1-3]
\end{tikzcd}\]
so that $\HH^1(kP)$ cannot be solvable.
\end{example}
\begin{example} We of course are also interested in non-$p$-groups, where our theorem gives us a sufficient criterion for insolvability. Considering groups of small orders, the first group that can be shown to be insolvable is $\operatorname{SL}(2,3)$ at the prime $3$. Considering $\operatorname{SL}(2,3)/\Phi_3(\operatorname{SL}(2,3))$, we get the following graph,
\[\begin{tikzcd}
	e && g \\
	\\
	{g^2}
	\arrow[from=1-1, to=1-3]
	\arrow[from=1-1, to=3-1]
	\arrow[from=1-3, to=1-3, loop, in=10, out=80, distance=10mm]
	\arrow[shift left, from=1-3, to=3-1]
	\arrow[shift left, from=3-1, to=1-3]
	\arrow[from=3-1, to=3-1, loop, in=190, out=260, distance=10mm]
\end{tikzcd}\]
which obviously contains a cycle.
\end{example}
\section{Solvability of Product Groups}
\subsection{Transfer on Product Groups}
We now use the main theorem to prove the following corollary.
\begin{corollary} Given $p\not=2$ and $p$-groups $P_1, P_2$, and writing $P:=P_1 \times P_2$, $\HH^1(k P)$ is solvable if and only if $\HH^1(kP_1)$ and $\HH^1(kP_2)$ are solvable. \end{corollary}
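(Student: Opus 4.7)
The plan is to apply Theorem 1.2 and reduce the corollary to showing that $\Gamma(P_1 \times P_2)$ is acyclic iff both $\Gamma(P_1)$ and $\Gamma(P_2)$ are. One direction is straightforward: a cycle $a_1 \to \cdots \to a_n \to a_1$ in $\Gamma(P_i)$ lifts to a cycle in $\Gamma(P_1 \times P_2)$ by pairing each $a_k$ with $e$ in the other coordinate, each such edge being witnessed by the central preimage $e \in Z(P_j)$ in that slot.

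For the converse, I would first compute the transfer on a direct product. Taking right transversals $\{s_i\}$ of $H_1 \leq P_1$ and $\{t_j\}$ of $H_2 \leq P_2$, the pairs $(s_i, t_j)$ form a right transversal of $H_1 \times H_2$ in $P_1 \times P_2$, and a direct calculation from the definition of transfer gives
\[
\operatorname{tr}_{H_1 \times H_2}^{P_1 \times P_2}\bigl((y_1, y_2)\bigr) = \bigl(\operatorname{tr}_{H_1}^{P_1}(y_1)^{[P_2 : H_2]},\, \operatorname{tr}_{H_2}^{P_2}(y_2)^{[P_1 : H_1]}\bigr)
\]
in $(H_1/H_1') \times (H_2/H_2')$. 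Combining this with the product identities $C_P((x_1, x_2)) = C_{P_1}(x_1) \times C_{P_2}(x_2)$ and $\Phi(P_1 \times P_2) = \Phi(P_1) \times \Phi(P_2)$, and noting that $[P_i : H_i]$ is a power of $p$ while $(H_i/H_i')/\Phi$ has exponent $p$, I obtain a refined edge criterion: $(a, b) \to (c, d)$ in $\Gamma(P_1 \times P_2)$ iff either (I) $b$ has a central preimage in $P_2$ and $a \to c$ in $\Gamma(P_1)$, or (II) $a$ has a central preimage in $P_1$ and $b \to d$ in $\Gamma(P_2)$.

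I would then extract two structural consequences of $\Gamma(P_i)$ being acyclic, for $p$ odd. First, any $a \ne e$ with a central preimage $x \in Z(P_i)$ gives $C_{P_i}(x) = P_i$ and $\operatorname{tr}_{P_i}^{P_i}(x) = x$, which is non-trivial modulo $\Phi$, producing a loop $a \to a$; thus acyclicity forces $Z(P_i) \subseteq \Phi(P_i)$. Second, taking $x = e$ shows the edge $e \to a$ always exists for $a \ne e$, so any edge terminating at $e$ would close a $1$- or $2$-cycle; hence $\Gamma(P_i)$ has no edges terminating at $e$. Plugging both consequences into the refined criterion, edges in $\Gamma(P_1 \times P_2)$ issue only from vertices of the form $(a, e)$ or $(e, b)$ and land only at vertices with at least one non-identity coordinate. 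A short vertex-by-vertex analysis then shows that any cycle must remain within a single axis $A_1 \times \{e\}$ or $\{e\} \times A_2$ (where $A_i := P_i / \Phi(P_i)$), cannot pass through $(e, e)$, and therefore projects onto a cycle in the corresponding $\Gamma(P_i)$---contradicting the assumed acyclicity.

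The main obstacle is the transfer computation for the product and, in particular, recognising that the exponent $[P_i : H_i]$ annihilates the opposite coordinate modulo $\Phi$; once the refined edge criterion is in hand, the remaining graph-theoretic bookkeeping is mechanical.
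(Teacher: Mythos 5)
Your proposal is correct and follows essentially the same route as the paper: both rest on the product formula for the transfer, the observation that a non-central coordinate makes the corresponding exponent a nontrivial power of $p$ (hence killed after passing modulo the Frattini subgroup), and the resulting confinement of any would-be cycle to one of the two axes $P_i/\Phi(P_i)$. The only cosmetic differences are that you derive $Z(P_i)\subseteq\Phi(P_i)$ directly from the loop criterion for $\Gamma(P_i)$ where the paper cites \cite{BKL}, and that you analyse the full graph $\Gamma(P)$ (handling the off-axis edges by noting they cannot lie on a cycle) where the paper passes to the reduced graph $\bar{\Gamma}(P)$ to discard them.
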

First note that one of the directions is trivial. $\operatorname{HH}^1(kP_i) \subset \HH^1(kP)$, so that solvability of $\operatorname{HH}^1(kP)$ implies solvability of $\HH^1(kP_i)$ for $i\in \{1, 2\}$. Thus we only need to show the converse; so we suppose that both $\HH^1(kP_1)$ and $\HH^1(kP_2)$ are solvable. At this point, we recall a result from $\cite{BKL}$: if $p\not=2$ and $Z(P) \not \subset \Phi(P)$ then $\HH^1(kP)$ is not solvable. So then we must have $Z(P_1) \subset \Phi(P_1)$ and $Z(P_2) \subset \Phi(P_2)$. We proceed with the following proposition.
\begin{proposition} Given $x \in P_1$, $y \in P_2$, we have
$$\operatorname{tr}_{C_P((x,y))}^P(a,b) = (\operatorname{tr}_{C_{P_1}(x)}^{P_1}(a)^{|y^{P_2}|}, \operatorname{tr}_{C_{P_2}(y)}^{P_2}(b)^{|x^{P_1}|}).$$
\end{proposition}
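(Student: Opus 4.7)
The plan is to compute both sides directly from the definition of transfer. First I would observe that for a product group, centralizers split as a product: $C_P((x,y)) = C_{P_1}(x) \times C_{P_2}(y)$, and consequently $C_P((x,y))' = C_{P_1}(x)' \times C_{P_2}(y)'$, so that $C_P((x,y))/C_P((x,y))' \cong C_{P_1}(x)/C_{P_1}(x)' \times C_{P_2}(y)/C_{P_2}(y)'$. This identification is where the right-hand side of the claimed formula lives.

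Next, I would pick right transversals $t_1, \dots, t_m$ of $C_{P_1}(x)$ in $P_1$ and $s_1, \dots, s_n$ of $C_{P_2}(y)$ in $P_2$, and note that $\{(t_i, s_j)\}_{i,j}$ is then a right transversal of $C_P((x,y))$ in $P$. For any $(a,b) \in P$, the equations $t_i a = a_i t_{\sigma(i)}$ (with $a_i \in C_{P_1}(x)$) and $s_j b = b_j s_{\tau(j)}$ (with $b_j \in C_{P_2}(y)$) together give $(t_i, s_j)(a,b) = (a_i, b_j)(t_{\sigma(i)}, s_{\tau(j)})$. So the transfer of $(a,b)$ from $P$ down to $C_P((x,y))/C_P((x,y))'$ is the image of $\prod_{i,j}(a_i, b_j)$ under the quotient.

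Because we are computing in the abelian group $C_{P_1}(x)/C_{P_1}(x)' \times C_{P_2}(y)/C_{P_2}(y)'$, we may reorder the product coordinate by coordinate, obtaining
$$\prod_{i,j}(a_i, b_j) \equiv \bigl(\prod_{i,j} a_i,\; \prod_{i,j} b_j\bigr) = \bigl((\prod_i a_i)^n,\; (\prod_j b_j)^m\bigr).$$
Now $\prod_i a_i$ represents $\operatorname{tr}_{C_{P_1}(x)}^{P_1}(a)$ and $\prod_j b_j$ represents $\operatorname{tr}_{C_{P_2}(y)}^{P_2}(b)$. Finally I would identify the exponents via the orbit-stabilizer theorem: $n = [P_2 : C_{P_2}(y)] = |y^{P_2}|$ and $m = [P_1 : C_{P_1}(x)] = |x^{P_1}|$, which yields the stated formula.

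There is no real obstacle — the proof is essentially bookkeeping, with the only care needed being the point at which commutativity is used (we can only freely reorder the $(a_i, b_j)$ after passing to the abelian quotient $C_P((x,y))/C_P((x,y))'$, since the elements themselves need not commute). The independence of the transfer from the choice of transversal, which we have already invoked in the paper, ensures the answer is well defined.
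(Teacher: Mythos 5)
Your proof is correct and follows essentially the same route as the paper: split the centralizer as a product, take the product transversal, and determine the factors $(a_i,b_j)$ coordinate-wise. In fact you spell out the final step (reordering in the abelian quotient and counting the exponents $|y^{P_2}|$ and $|x^{P_1}|$ via orbit--stabilizer) which the paper's proof leaves implicit with ``so we get our result.''
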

\begin{proof}
First, we note that 
$$C_P((x,y)= C_{P_1}(x) \times C_{P_2}(y).$$
The inclusion $C_{P_1}(x) \times C_{P_2}(y)$ into $C_P((x,y))$ is immediate. Note that if $(a,b)$ commutes with $(x,y)$ then taking the projection onto  $P_1$, $a$ will necessarily commute with $x$. The same obviously holds for $b$ and $y$, so that we get the reverse inclusion. \\
Take $\gamma_1, \ldots, \gamma_n$ a right transversal of $C_{P_1}(x)$ in $P_1$, and $\delta_1, \ldots, \delta_m$ a right transversal of $C_{P_2}(y)$ in $P_2$. Then clearly $\{( \gamma_i ,\delta_j ): i \in \{1,\ldots,n\}, j \in \{1, \ldots, m \}\}$ is a right transversal of $C_P((x,y))$. So given $(a,b)\in P$, and $1\leq i \leq n$, $1 \leq j \leq m$, to define transfer we require the elements $(a_{[ij]}, b_{[ij]}) \in C_P((x,y))$ so that
$$(\gamma_i, \delta_j) (a,b) = (a_{[ij]}, b_{[ij]}) (\gamma_{s_{ij}}, \delta_{t_{ij}}),$$
where then $\operatorname{tr}_{C_P((x,y))}^P(a,b) = \prod (a_{[ij]}, b_{[ij]})$.
But we note, given $a_{[i]}\in C_{P_1}(x)$ and $b_{[i]} \in C_{P_2}(y)$, with
$$\begin{aligned}\gamma_i a &= a_{[i]} \gamma_{s_i}, \\
\delta_j b &= b_{[j]} \delta_{t_j}, \end{aligned}$$
 we see our requirement on $a_{[ij]}, b_{[ij]}$ is $(\gamma_i a, \delta_j b) = (a_{[ij]}\gamma_{s_{ij}}, b_{[ij]}\delta_{t_{ij}}),$ which coordinate-wise is satisfied by $a_{[ij]}=a_{[i]}$, $b_{[ij]}=b_{[j]}$. So we get our result.
 \end{proof}
Then we immediately get the following fact.
\begin{corollary} Suppose $(x,y) \in P$, and that neither $x$ nor $y$ are central. Then $\operatorname{tr}_{C_P((x,y))}^P$ is zero. \end{corollary}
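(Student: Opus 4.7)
The plan is to combine the transfer formula from the previous proposition with the class equation for $p$-groups. First I would invoke orbit--stabilizer: $|x^{P_1}| = [P_1 : C_{P_1}(x)]$ is a power of $p$, and since $x$ is by hypothesis non-central, $C_{P_1}(x)$ is a proper subgroup of $P_1$, so $p$ divides $|x^{P_1}|$. The same argument applied to $P_2$ shows that $p$ divides $|y^{P_2}|$.

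Then, applying the previous proposition, for every $(a,b) \in P$ we have
$$\operatorname{tr}_{C_P((x,y))}^P(a,b) = \bigl(\operatorname{tr}_{C_{P_1}(x)}^{P_1}(a)^{|y^{P_2}|},\; \operatorname{tr}_{C_{P_2}(y)}^{P_2}(b)^{|x^{P_1}|}\bigr),$$
so each coordinate is a $p$-th power inside the abelian $p$-group $C_{P_i}(z)/C_{P_i}(z)'$. For any abelian $p$-group $A$ we have $\Phi_p(A) = A^p$, so each coordinate lies in the Frattini subgroup of the appropriate component, and hence the image of the whole transfer lands in $\Phi_p\bigl(C_P((x,y))/C_P((x,y))'\bigr)$.

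Finally I would appeal to the identification used throughout the paper, namely
$$\gphom(C_P((x,y)),k) \cong \gphom\bigl(C_P((x,y))/\Phi_p(C_P((x,y))/C_P((x,y))'),\, k\bigr),$$
which tells us that every character $\alpha$ of $C_P((x,y))$ vanishes on $\Phi_p$. Composing any such $\alpha$ with $\operatorname{tr}_{C_P((x,y))}^P$ therefore produces the zero map, which is exactly the notion of ``zero'' relevant to membership in the edge set of $\Gamma(P)$. There is no real obstacle here: the only subtlety is the bookkeeping check that $p$-th powers in the abelianization sit inside $\Phi_p$, which is immediate for abelian groups, and that ``zero'' must be read modulo Frattini — the same convention the paper has already justified.
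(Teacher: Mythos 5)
Your proposal is correct and is exactly the argument the paper intends: the paper states this corollary without proof as an immediate consequence of the product formula for transfer, and the justification is precisely that $|x^{P_1}|$ and $|y^{P_2}|$ are nontrivial $p$-powers by orbit--stabilizer, so both coordinates are $p$-th powers and hence killed by every homomorphism to $k$ (equivalently, land in $\Phi_p$ of the abelianization), which is the only sense of ``zero'' the paper ever uses for transfer. You also correctly flag the one genuine subtlety, namely that the transfer itself need not be the trivial homomorphism and ``zero'' must be read modulo the Frattini identification $\gphom(C_P((x,y)),k)\cong\gphom(C_P((x,y))/\Phi_p(C_P((x,y))),k)$.
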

\subsection{The graph on Product Groups}
We now consider the structure of the graph $\Gamma(P)$. $\Gamma(P)$ has vertices $P/\Phi(P)$; we know that $\Phi(P) = \Phi(P_1) \times \Phi(P_2)$. So then $P / \Phi(P) \cong P_1 / \Phi(P_1) \times P_2 / \Phi(P_2)$. \\
\begin{proposition} Given $P_1, P_2, P$ so that $\HH^1(kP_i)$ is solvable, $\bar{\Gamma}(P)$ only has edges from the inclusions of subgraphs
$$\bar{\Gamma}(P_1) \hookrightarrow \bar{\Gamma}(P), \; \bar{\Gamma}(P_2) \hookrightarrow \bar{\Gamma}(P),$$
where the inclusions on vertices are given by the inclusion of subgroups.
\end{proposition}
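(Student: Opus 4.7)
The plan is to combine the product formula for transfer from the previous proposition with the observation that $Z(P_i) \subset \Phi(P_i)$, which follows from the cited BKL result since each $\HH^1(kP_i)$ is solvable and $p \neq 2$. Under the identification $P/\Phi(P) \cong P_1/\Phi(P_1) \times P_2/\Phi(P_2)$, vertices of $\Gamma(P)$ become pairs $(a_1, a_2)$. An edge from $(a_1, a_2)$ to $(b_1, b_2)$ requires witnesses $(x_1, x_2), (y_1, y_2) \in P$ for which the transfer lies outside $\Phi_p$ of
\[
C_P((x_1, x_2))/C_P((x_1, x_2))' \;\cong\; (C_{P_1}(x_1)/C_{P_1}(x_1)') \times (C_{P_2}(x_2)/C_{P_2}(x_2)'),
\]
whose Frattini splits as the product of the Frattinis.

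First I would show the source must have one coordinate equal to $e$. The second coordinate of the transfer is $\operatorname{tr}_{C_{P_2}(x_2)}^{P_2}(y_2)^{|x_1^{P_1}|}$, which is a $p$-th power and therefore in $\Phi_p$ whenever $x_1$ is non-central; symmetrically for the first coordinate. Hence at least one of $x_1, x_2$ is central, and WLOG if $x_2$ is central then $a_2 = e$ by $Z(P_2) \subset \Phi(P_2)$. With $x_2$ central the transfer simplifies to $(\operatorname{tr}_{C_{P_1}(x_1)}^{P_1}(y_1),\, y_2^{|x_1^{P_1}|})$. If additionally $a_1 \neq e$, the second factor is a $p$-th power and the edge condition reduces exactly to the edge condition for $a_1 \to b_1$ in $\Gamma(P_1)$, with $b_2$ entirely unconstrained. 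If $a_1 = e$ as well, one may take $x_1 = e$, and the condition becomes simply $(b_1, b_2) \neq e$.

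Next I would pass to $\bar\Gamma(P)$ by imposing $\mathfrak{h}_{(b_1, b_2)} \neq 0$, i.e.\ the existence of an outgoing edge from $(b_1, b_2)$ in $\Gamma(P)$. The same source analysis applied to $(b_1, b_2)$ forces $b_1 = e$ or $b_2 = e$. When $a_1 \neq e$, the forced edge in $\Gamma(P_1)$ has $b_1 \neq e$ (because $\Gamma(P_1)$ has no edges into $e$: if $y_1 \in \Phi(P_1)$ its transfer is automatically a $p$-th power in any abelianization), so $b_2 = e$ and the edge lies between $(a_1, e)$ and $(b_1, e)$. The equivalence that $\mathfrak{h}_{(b_1, e)} \neq 0$ if and only if the analogue of $\mathfrak{h}$ for $P_1$ is nonzero at $b_1$ follows from one further application of the source analysis, so the edge is precisely the image of an edge in $\bar\Gamma(P_1)$. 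The case $a_2 \neq e$ is symmetric. When the source is $e$, the target constraint $b_1 = e$ or $b_2 = e$ again places the edge inside the embedded image of $\bar\Gamma(P_1)$ or $\bar\Gamma(P_2)$, since edges from $e$ in each $\bar\Gamma(P_i)$ reach exactly those $b_i$ for which the corresponding $\mathfrak{h}$-summand is nonzero.

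The main obstacle is the careful case-tracking needed to promote necessary conditions into exact equivalences, and in particular to confirm the identification $\mathfrak{h}_{(b_1, e)} \neq 0 \Leftrightarrow \mathfrak{h}_{b_1}^{P_1} \neq 0$; everything reduces to systematically applying the transfer factorization at both source and target of candidate edges, together with the splitting $\Phi_p$ of a product is the product of $\Phi_p$'s.
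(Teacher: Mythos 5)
Your proposal is correct and follows essentially the same route as the paper: both arguments rest on the product formula for transfer, the observation that non-centrality of a coordinate forces the opposite transfer coordinate to be a $p$-th power (hence to land in the Frattini subgroup), and the BKL consequence $Z(P_i)\subset\Phi(P_i)$. The paper merely packages the same coordinate analysis as four explicit cases (well-definedness, fullness, no edges at doubly nontrivial vertices, no edges between the two subgraphs) rather than your source/target formulation.
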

\begin{proof}
To be thorough, we split this proof into $4$ parts. First, we show that our inclusion of subgraphs is well defined. Second, we show that these subgraph inclusions are ``full'' (there are no extra edges in these subgraphs). Third, we show that there are no edges outside of these subgraphs, and finally we show that there are no edges between these two subgraphs.
\begin{enumerate}
\item We begin by showing that these subgraphs are well defined. Suppose I have an edge from $a$ to $b$ in $\bar{\Gamma}(P_1)$. This is the data of $x,y \in P_1$ so that $x$ (resp. $y$) has image $a$ (resp. $b$) in $P_1 / \Phi(P_1)$, and a map $\alpha:C_{P_1}(x) \rightarrow k$ so that
$$\alpha \circ \operatorname{tr}_{C_{P_1}(x)}^{P_1}(y) \not=0.$$
So then, we seek to demonstrate an edge between $(a,e)$ and $(b,e)$. Take $(x,e), (y,e) \in P$, and $(\alpha,0): C_{P_1}(x)\times P_2 \cong C_P((x,e)) \rightarrow k$. Then we can see from our formula for transfer that 
$$(\alpha,0)\circ \operatorname{tr}_{C_P((x,e))}^P(y,e) = (\alpha,0)(\operatorname{tr}_{C_{P_1}(x)}^{P_1}(y), e) = \alpha \circ \operatorname{tr}_{C_{P_1}(x)}^P(y),$$
which doesn't vanish. So we see that $\bar{\Gamma}(P_1), \bar{\Gamma}(P_2)$ are subgraphs of $\bar{\Gamma}(P)$. \\
\item We seek to show that no extra edges are added to these subgraphs. First suppose we have an edge from $(a,e)$ to $(b,e)$ in $\bar{\Gamma}(P)$. This is the data of $(x,y), (w,z) \in P$ so that $(x,y)$ (resp. $(w,z)$) has image $(a,e)$ (resp. $(b,e)$), and of $\alpha:C_P{((x,y))}\rightarrow k$, so that with $\phi:= \alpha \circ \operatorname{tr}_{C_P((x,y))}^P$, we have
$$\phi((w,z)) \not =0.$$
But note, this is $\phi(w,e)+\phi(e,z)$. Further, $z\in \Phi(P_2)$ so that $\phi(e,z) =0$. So we know that $\phi(w,e) \not=0$. So then writing $\beta:C_{P_1}(x) \rightarrow k$ for the composition of $\alpha$ with the inclusion $C_{P_1}(x) \rightarrow C_P((x,y))$ we have that $\beta$ realises an edge between $a$ and $b$ in $\bar{\Gamma}(P)$.

\item Take $(a,b) \in P/ \Phi(P)$, and suppose that $a\not=e, b\not=e$, and that $(x,y) \in P$ is a preimage of $(a,b)$ in $P$. We know that $Z(P_1) \subset \Phi(P_1)$ and $Z(P_2) \subset \Phi(P_2)$, so we know that neither $x$ nor $y$ is central in $P$. So then $\operatorname{tr}_{C_P((x,y))}^P$ is zero, and thus $\mathfrak{h}_{(a,b)}$ is zero. So in the reduced graph $\bar{\Gamma}{(P)}$, there are no edges into or out of $(a,b)$. \\
\item Now take $a\in P_1$, $b\in P_2$, both not the identity. We seek to show that we have no edges from an element $(a,e)$ to one of the form $(e,b)$. Such an edge would imply the existence $(x,y), (w,z)\in P$ and $\alpha:C_{P}(a,e) \rightarrow k$ with the following properties. We have that $(x,y)$ (resp. $(w,z)$) has image $(a,e)$ (resp. $(e,b)$) in $P/\Phi(P)$, so that with $\phi := \alpha \circ \operatorname{tr}_{C_P((x,y))}$, we have
$$\phi((w,e)) + \phi((e,z)) = \phi((w,z))\not=0.$$
Now, from before we know that
$$\operatorname{tr}_{C_P((x,y))}^P((w,z)) = (\operatorname{tr}_{C_{P_1}(x)}^{P_1}(w)^{|y^{P_2}|}, \operatorname{tr}_{C_{P_2}(y)}^{P_1}(z)^{|x^{P_1}|}),$$
But as $x$ is not central, clearly the second coordinate is a $p^{th}$ power so the term $\phi((e,z))$ will vanish. But further, we know that $w \in \Phi(P_1)$, so that $\phi((w,e))$ will also vanish. So we cannot have such an edge. \\
\end{enumerate}
\end{proof}

So then we see that $\bar{\Gamma}(P)$ has a cycle if and only if one of $\bar{\Gamma}(P_1)$ and $\bar{\Gamma}(P_2)$ has one.

\section{Partial results on $2$-groups}
As stated before, our proof does not work in the case $p=2$, as we cannot rule out that the graph has loops. To state a partial result, we will define a second graph, $\Gamma_2(P)$; if this has graph has any cycles then our $\HH^1(kP)$ will not be solvable. For the rest of the section, we assume $p=2$. We define the graph as follows.
\begin{defn} $\Gamma_2(P)$ is a directed graph with:
\begin{itemize} 
\item vertices labelled by $P / \Phi(P)$.
\item an edge $a \rightsquigarrow b$, when there are edges $ a\rightarrow b$ and $a\rightarrow ab$ in $\Gamma(P)$
\end{itemize}
\end{defn}
We have the following relatively easy calculation
\begin{lemma} If we have $a, b \in P/ \Phi(P)$ and an edge $a \rightsquigarrow b$ in $\Gamma_2(P)$, then writing $\mathfrak{a}$ for the subalgebra generated by $\mathfrak{h}_a, \mathfrak{h}_b$, we have that $\mathfrak{h}_b \subset [\mathfrak{a}, \mathfrak{a}]$. \end{lemma}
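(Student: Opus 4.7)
The plan is to produce a single $\phi \in \gphom(P/\Phi(P), k)$ with $(\phi, a) \in \mathfrak{h}_a$ that satisfies both $\phi(b) \neq 0$ and $\phi(ab) \neq 0$, and then to show that $\operatorname{Ad}(\phi, a)^2$ acts on $\mathfrak{h}_b$ as multiplication by the nonzero scalar $\phi(b)\phi(ab)$. This would immediately give $\mathfrak{h}_b \subset [\mathfrak{a}, \mathfrak{a}]$, since every $(\psi, b) \in \mathfrak{h}_b$ is then a scalar multiple of a double bracket of elements of $\mathfrak{a}$.

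To produce such a $\phi$, I would set $V_a := \{ \phi : (\phi, a) \in \mathfrak{h}_a \}$. Because $\mathfrak{h}_a$ is the image of a linear map (the $a$-component of $\mathfrak{q}$), $V_a$ is a $k$-subspace of $\gphom(P/\Phi(P), k)$. The hypothesis that $\Gamma_2(P)$ contains $a \rightsquigarrow b$ translates to: $V_a$ contains some $\phi_1$ with $\phi_1(b) \neq 0$ and some $\phi_2$ with $\phi_2(ab) \neq 0$. Consider the evaluation $V_a \to k^2$, $\phi \mapsto (\phi(b), \phi(ab))$, and let $I \subset k^2$ be its image. The hypothesis says $I$ meets both $\{x \neq 0\}$ and $\{y \neq 0\}$. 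A short case analysis on $\dim_k I$ produces the desired $\phi$: if $\dim I = 2$ then $I = k^2$ so we can hit $(1, 1)$; and if $\dim I = 1$ then $I$ cannot lie on either coordinate axis, hence is spanned by some $(x_0, y_0)$ with both entries nonzero.

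With such a $\phi$ and arbitrary $(\psi, b) \in \mathfrak{h}_b$, the computation is direct. Using the bracket formula, with $p = 2$ turning signs into plus signs, I compute
\begin{equation*}
[(\phi, a), (\psi, b)] = (\phi(b)\psi + \psi(a)\phi, ab) =: (\eta, ab).
\end{equation*}
As $\phi$ is additive on $P/\Phi(P)$ we have $\phi(a) + \phi(b) = \phi(ab)$, so $\eta(a) = \psi(a)\phi(ab)$. Bracketing once more, using $a^2 = e$ in the elementary abelian $2$-group $P/\Phi(P)$:
\begin{equation*}
[(\phi, a), (\eta, ab)] = (\phi(ab)\eta + \eta(a)\phi, b) = (\phi(b)\phi(ab)\psi + 2\phi(ab)\psi(a)\phi, b),
\end{equation*}
and the $\phi$-term vanishes in characteristic $2$, leaving $\phi(b)\phi(ab)(\psi, b)$, a nonzero scalar multiple of $(\psi, b)$.

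The main obstacle I expect is the linear algebra step. If one tried instead to use two distinct witnesses $\phi_1 \neq \phi_2$ (one for each edge), the double bracket $[(\phi_2, a), [(\phi_1, a), (\psi, b)]]$ picks up uncancellable terms proportional to $\phi_1$ and $\phi_2$ in grade $b$, and these auxiliary functions need not belong to $\mathfrak{h}_b$, so there is no way to eliminate them. Collapsing both edges onto a single $\phi$ via the subspace argument in $k^2$ is precisely what lets the characteristic $2$ cancellation inside $\operatorname{Ad}(\phi, a)^2$ isolate a clean nonzero multiple of $(\psi, b)$.
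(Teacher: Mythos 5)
Your proposal is correct and follows essentially the same route as the paper: both reduce to finding a single $\phi$ with $\phi(b)\neq 0$ and $\phi(ab)\neq 0$ (the paper does this by replacing $\phi$ with $\phi+\phi'$ when neither witness works alone, which is your rank-one case in disguise) and then compute $\operatorname{Ad}((\phi,a))^2((\psi,b)) = \phi(ab)\phi(b)(\psi,b)$ using the characteristic-$2$ cancellation. Your closing remark about why two distinct witnesses cannot be combined directly is a sensible observation, but the argument itself is the paper's.
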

\begin{proof} Suppose we are in the situation above. Then we have edges $a \rightarrow b$ and $a \rightarrow ab$ in $\Gamma(P)$, which give us elements $(\phi,a), (\phi',a) \in \mathfrak{h}_a$ so that $\phi(b)\not=0, \phi'(ab)\not=0$. We seek $(\theta, a)$ so that $\theta(b)\not =0$ and $\theta(ab)\not=0$. If neither of $\phi, \phi'$ satisfy this, then
$$\begin{aligned}(\phi+\phi')(b)&=\phi(b)\not=0, \\
(\phi+\phi')(ab)&=\phi'(ab)\not=0 .\end{aligned}$$
So we can assume without restriction of generality that $\phi(a)\not=0, \phi(ab)\not=0$. Take any $(\psi,b) \in \mathfrak{h}_b$. Then we have:
$$[(\phi,a),(\psi,b)] = (\phi(b)\psi-\psi(a)\phi, ab),$$
and further
$$\begin{aligned} \operatorname{Ad}^2((\phi,a))((\psi,b))&= [(\phi,a), (\phi(b)\psi-\psi(a)\phi,ab)] \\
&= [(\phi(ab)(\phi(b)\psi-\psi(a)\phi)-(\phi(b)\psi(a)-\psi(a)\phi(a))\phi, b] \\
&= (\phi(ab)\phi(b)\psi, b). \end{aligned} $$
So as $\phi(ab)\not=0$ and $\phi(b)\not=0$, we have that $(\psi, b) \subset [\mathfrak{a}, \mathfrak{a}]$. But as $(\psi,b)$ was arbitrary, we have $\mathfrak{h}_a \subset [ \mathfrak{a}, \mathfrak{a}]$. 
\end{proof}
\begin{corollary} If $\Gamma_2(P)$ has a cycle, then $\HH^1(kP)$ is not solvable.
\end{corollary}
\begin{proof}
Suppose we have a cycle $a_1 \rightsquigarrow a_2 \rightsquigarrow \ldots \rightsquigarrow a_n \rightsquigarrow a_1$. Then write $\mathfrak{a}$ for the subalgebra generated by $\{\mathfrak{h}_{a_i}\}_{i=1,\ldots,n}$. From the above, we clearly have that $\mathfrak{h}_{a_i} \subset [\mathfrak{a}, \mathfrak{a}]$ for each $i=1,\ldots,n$, but as these generate $\mathfrak{a}$, $[\mathfrak{a}, \mathfrak{a}] = \mathfrak{a}$. So $\mathfrak{h}$ is not solvable, and so neither is $\HH^1(kP)$.
\end{proof}
\begin{remark} It is not obvious that the converse of this statement holds. Indeed, all examples the author has checked only fail to be solvable in the case where $\Gamma_2(P)$ has a cycle.
\end{remark}
\begin{example}[$C_2$] We briefly make explicit the case of $C_2$, which exemplifies that our old method of proof does not work. $\Gamma(C_2)$ is as follows:
\[\begin{tikzcd}
	e && g
	\arrow[from=1-1, to=1-3]
	\arrow[from=1-3, to=1-3, loop, in=55, out=125, distance=10mm]
\end{tikzcd}\]
and of course, $\Gamma_2(C_2)$ has no edges.
\end{example}
\begin{example}[$C_2 \times C_2$] This is our first example of a non-solvable Lie algebra in the case $p=2$. Below is the graph $\Gamma(C_2 \times C_2)$ 

\[\begin{tikzcd}
	e && g \\
	\\
	h && gh
	\arrow[from=1-1, to=1-3]
	\arrow[from=1-1, to=3-1]
	\arrow[from=1-1, to=3-3]
	\arrow[from=1-3, to=1-3, loop, in=10, out=80, distance=10mm]
	\arrow[from=1-3, to=3-1]
	\arrow[from=1-3, to=3-3]
	\arrow[shift left=2, from=3-1, to=1-3]
	\arrow[from=3-1, to=3-1, loop, in=190, out=260, distance=10mm]
	\arrow[from=3-1, to=3-3]
	\arrow[shift left=3, from=3-3, to=1-3]
	\arrow[shift left=3, from=3-3, to=3-1]
	\arrow[from=3-3, to=3-3, loop, in=280, out=350, distance=10mm]
\end{tikzcd}\]

which gives us the graph $\Gamma_2(C_2 \times C_2)$ as follows:

\[\begin{tikzcd}
	e && g \\
	\\
	h && gh
	\arrow[from=1-3, to=3-1]
	\arrow[from=1-3, to=3-3]
	\arrow[shift left=3, from=3-1, to=1-3]
	\arrow[from=3-1, to=3-3]
	\arrow[shift left=3, from=3-3, to=1-3]
	\arrow[shift left=3, from=3-3, to=3-1]
\end{tikzcd}\]
which obviously contains cycles, so that $\operatorname{HH}^1(k(C_2 \times C_2))$ cannot be solvable.
\end{example}
\begin{example}[The Small Group $(32,27)$] We give one more example with a larger graph $\Gamma(P)$, to drive home the point. $\Gamma(P)$ is as follows:
\[\begin{tikzcd}
	&& {f_1} && {f_2} \\
	\\
	e &&&&&& {f_1f_2} \\
	\\
	{f_1f_2f_3} &&&&&& {f_3} \\
	\\
	&& {f_3f_2} && {f_1f_3}
	\arrow[from=1-3, to=1-3, loop, in=100, out=170, distance=10mm]
	\arrow[from=1-3, to=1-5]
	\arrow[between={0.1}{0.9}, from=1-3, to=3-7]
	\arrow[from=1-3, to=5-1]
	\arrow[from=1-3, to=7-3]
	\arrow[from=1-3, to=7-5]
	\arrow[shift left=3, between={0.2}{1}, from=1-5, to=1-3]
	\arrow[from=1-5, to=1-5, loop, in=10, out=80, distance=10mm]
	\arrow[shift left=3, between={0.1}{1}, from=1-5, to=3-7]
	\arrow[from=1-5, to=5-1]
	\arrow[from=1-5, to=7-3]
	\arrow[from=1-5, to=7-5]
	\arrow[dashed, from=3-1, to=1-3]
	\arrow[dashed, from=3-1, to=1-5]
	\arrow[dashed, from=3-1, to=3-7]
	\arrow[dashed, from=3-1, to=5-1]
	\arrow[dashed, from=3-1, to=5-7]
	\arrow[dashed, from=3-1, to=7-3]
	\arrow[dashed, from=3-1, to=7-5]
	\arrow[shift left=3, between={0.1}{1}, from=3-7, to=1-3]
	\arrow[from=3-7, to=1-5]
	\arrow[from=3-7, to=5-1]
	\arrow[from=3-7, to=7-3]
	\arrow[from=3-7, to=7-5]
\end{tikzcd}\]
then, not drawing vertices with no outgoing edges, this leads to $\Gamma_2(P)$ to be of the form:

\[\begin{tikzcd}
	{f_1} && {f_2} \\
	\\
	&&&& {f_1f_2}
	\arrow[from=1-1, to=1-3]
	\arrow[between={0.1}{0.9}, from=1-1, to=3-5]
	\arrow[shift left=3, between={0.2}{1}, from=1-3, to=1-1]
	\arrow[shift left=3, between={0.1}{1}, from=1-3, to=3-5]
	\arrow[shift left=3, between={0.1}{1}, from=3-5, to=1-1]
	\arrow[from=3-5, to=1-3]
\end{tikzcd}\].
\end{example}
\bibliographystyle{alpha} 
\bibliography{referencesSOLP} 

@book{Witherspoon,
	author = {Witherspoon, Sarah},
	date-added = {2025-12-11 16:34:24 +0000},
	date-modified = {2025-12-11 16:34:38 +0000},
	doi = {10.1090/gsm/204},
	journal = {Graduate Studies in Mathematics},
	month = dec,
	publisher = {American Mathematical Society},
	title = {Hochschild Cohomology for Algebras},
	url = {http://dx.doi.org/10.1090/gsm/204},
	year = {2019},
	bdsk-url-1 = {http://dx.doi.org/10.1090/gsm/204}}

@book{Gruenberg,
	author = {Gruenberg, Karl W.},
	date-added = {2025-12-11 15:55:14 +0000},
	date-modified = {2025-12-11 15:55:18 +0000},
	doi = {10.1007/bfb0059162},
	journal = {Lecture Notes in Mathematics},
	language = {en},
	publisher = {Springer Berlin Heidelberg},
	title = {Cohomological Topics in Group Theory},
	url = {http://dx.doi.org/10.1007/BFb0059162},
	year = {1970},
	bdsk-url-1 = {http://dx.doi.org/10.1007/BFb0059162}}

@book{Huppert,
	author = {Huppert, Bertram},
	date-added = {2025-12-11 15:53:24 +0000},
	date-modified = {2025-12-11 15:53:30 +0000},
	doi = {10.1007/978-3-031-87529-8},
	journal = {Grundlehren der mathematischen Wissenschaften},
	language = {en},
	publisher = {Springer Nature Switzerland},
	title = {Finite Groups I},
	url = {http://dx.doi.org/10.1007/978-3-031-87529-8},
	year = {2025},
	bdsk-url-1 = {http://dx.doi.org/10.1007/978-3-031-87529-8}}

@article{Eisele,
	abstractnote = {<p>For an arbitrary finite-dimensional algebra <inline-formula content-type="math/mathml"> <mml:math xmlns:mml="http://www.w3.org/1998/Math/MathML" alttext="upper A"> <mml:semantics> <mml:mi>A</mml:mi> <mml:annotation encoding="application/x-tex">A</mml:annotation> </mml:semantics> </mml:math> </inline-formula>, we introduce a general approach to determining when its first Hochschild cohomology <inline-formula content-type="math/mathml"> <mml:math xmlns:mml="http://www.w3.org/1998/Math/MathML" alttext="normal upper H normal upper H Superscript 1 Baseline left-parenthesis upper A right-parenthesis"> <mml:semantics> <mml:mrow> <mml:msup> <mml:mrow class="MJX-TeXAtom-ORD"> <mml:mi mathvariant="normal">H</mml:mi> <mml:mi mathvariant="normal">H</mml:mi> </mml:mrow> <mml:mn>1</mml:mn> </mml:msup> <mml:mo stretchy="false">(</mml:mo> <mml:mi>A</mml:mi> <mml:mo stretchy="false">)</mml:mo> </mml:mrow> <mml:annotation encoding="application/x-tex">mathrm {HH}^1(A)</mml:annotation> </mml:semantics> </mml:math> </inline-formula>, considered as a Lie algebra, is solvable. If <inline-formula content-type="math/mathml"> <mml:math xmlns:mml="http://www.w3.org/1998/Math/MathML" alttext="upper A"> <mml:semantics> <mml:mi>A</mml:mi> <mml:annotation encoding="application/x-tex">A</mml:annotation> </mml:semantics> </mml:math> </inline-formula> is, moreover, of tame or finite representation type, we are able to describe <inline-formula content-type="math/mathml"> <mml:math xmlns:mml="http://www.w3.org/1998/Math/MathML" alttext="normal upper H normal upper H Superscript 1 Baseline left-parenthesis upper A right-parenthesis"> <mml:semantics> <mml:mrow> <mml:msup> <mml:mrow class="MJX-TeXAtom-ORD"> <mml:mi mathvariant="normal">H</mml:mi> <mml:mi mathvariant="normal">H</mml:mi> </mml:mrow> <mml:mn>1</mml:mn> </mml:msup> <mml:mo stretchy="false">(</mml:mo> <mml:mi>A</mml:mi> <mml:mo stretchy="false">)</mml:mo> </mml:mrow> <mml:annotation encoding="application/x-tex">mathrm {HH}^1(A)</mml:annotation> </mml:semantics> </mml:math> </inline-formula> as the direct sum of a solvable Lie algebra and a sum of copies of <inline-formula content-type="math/mathml"> <mml:math xmlns:mml="http://www.w3.org/1998/Math/MathML" alttext="German s German l Subscript 2"> <mml:semantics> <mml:msub> <mml:mrow class="MJX-TeXAtom-ORD"> <mml:mi mathvariant="fraktur">s</mml:mi> <mml:mi mathvariant="fraktur">l</mml:mi> </mml:mrow> <mml:mn>2</mml:mn> </mml:msub> <mml:annotation encoding="application/x-tex">mathfrak {sl}_2</mml:annotation> </mml:semantics> </mml:math> </inline-formula>. We proceed to determine the exact number of such copies, and give an explicit formula for this number in terms of certain chains of Kronecker subquivers of the quiver of <inline-formula content-type="math/mathml"> <mml:math xmlns:mml="http://www.w3.org/1998/Math/MathML" alttext="upper A"> <mml:semantics> <mml:mi>A</mml:mi> <mml:annotation encoding="application/x-tex">A</mml:annotation> </mml:semantics> </mml:math> </inline-formula>. As a corollary, we obtain a precise answer to a question posed by Chaparro, Schroll, and Solotar.</p>},
	author = {Eisele, Florian and Raedschelders, Theo},
	date-added = {2025-12-11 12:40:29 +0000},
	date-modified = {2025-12-11 12:40:38 +0000},
	doi = {10.1090/tran/8064},
	journal = {Transactions of the American Mathematical Society},
	language = {en},
	month = sept,
	number = {11},
	pages = {7607--7638},
	publisher = {American Mathematical Society (AMS)},
	title = {On solvability of the first Hochschild cohomology of a finite-dimensional algebra},
	url = {http://dx.doi.org/10.1090/TRAN/8064},
	volume = {373},
	year = {2020},
	bdsk-url-1 = {http://dx.doi.org/10.1090/TRAN/8064}}

@article{Liu,
	abstractnote = {<jats:p>We realize explicitly the well-known additive decomposition of the Hochschild cohomology ring of a group algebra at the chain level. As a result, we describe the cup product, the Batalin--Vilkovisky operator and the Lie bracket in the Hochschild cohomology ring of a group algebra.</jats:p>},
	author = {Liu, Yuming and Zhou, Guodong},
	date-added = {2025-12-11 12:31:46 +0000},
	date-modified = {2025-12-11 12:31:58 +0000},
	doi = {10.4171/jncg/249},
	journal = {Journal of Noncommutative Geometry},
	month = sept,
	number = {3},
	pages = {811--858},
	publisher = {European Mathematical Society - EMS - Publishing House GmbH},
	title = {The Batalin--Vilkovisky structure over the Hochschild cohomology ring of a group algebra},
	url = {http://dx.doi.org/10.4171/JNCG/249},
	volume = {10},
	year = {2016},
	bdsk-url-1 = {http://dx.doi.org/10.4171/JNCG/249}}

@article{Degrassi,
	author = {Lleonard Rubio y Degrassi and Sibylle Schroll and Andrea Solotar},
	date-added = {2025-12-11 12:27:20 +0000},
	date-modified = {2025-12-11 12:27:39 +0000},
	journal = {Quaestiones Mathematicae},
	pages = {1955 - 1980},
	title = {The first Hochschild cohomology as a Lie algebra},
	url = {https://api.semanticscholar.org/CorpusID:85542677},
	volume = {46},
	year = {2019},
	bdsk-url-1 = {https://api.semanticscholar.org/CorpusID:85542677}}

@article{BKL,
	author = {Benson, Dave and Kessar, Radha and Linckelmann, Markus},
	date-added = {2025-10-13 14:05:05 +0100},
	date-modified = {2025-10-13 14:05:10 +0100},
	doi = {10.2140/pjm.2021.313.1},
	issn = {0030-8730},
	journal = {Pacific Journal of Mathematics},
	month = sep,
	number = {1},
	pages = {1--44},
	publisher = {Mathematical Sciences Publishers},
	title = {On the BV structure of the Hochschild cohomology of finite group algebras},
	url = {http://dx.doi.org/10.2140/pjm.2021.313.1},
	volume = {313},
	year = {2021},
	bdsk-url-1 = {http://dx.doi.org/10.2140/pjm.2021.313.1}}

@misc{Wang,
	archiveprefix = {arXiv},
	author = {Markus Linckelmann and Jialin Wang},
	date-added = {2025-10-06 15:40:59 +0100},
	date-modified = {2025-12-11 12:45:12 +0000},
	eprint = {2503.16697},
	howpublished = {(Available at https://arxiv.org/abs/2503.16697)},
	primaryclass = {math.RT},
	title = {On the solvability of the Lie algebra $\mathrm{HH}^1(B)$ for blocks of finite groups},
	url = {https://arxiv.org/abs/2503.16697},
	year = {2025},
	bdsk-url-1 = {https://arxiv.org/abs/2503.16697}}
\end{document}